\providecommand{\customgenericname}{}
\newcommand{\newcustomtheorem}[2]{%
  \newenvironment{#1}[1]
  {%
   \renewcommand\customgenericname{#2}%
   \renewcommand\theinnercustomgeneric{##1}%
   \innercustomgeneric
  }
  {\endinnercustomgeneric}
}
\newtheorem{theorem}{Theorem}[section]
\newtheorem{prop}[theorem]{Proposition}
\newtheorem{prob}[theorem]{Problem}
\newtheorem{cor}[theorem]{Corollary}
\newtheorem{lemma}[theorem]{Lemma}
\newtheorem{lem}[theorem]{Lemma}
\theoremstyle{definition}
\newtheorem{remark}{Remark}[section]
\newtheorem{def1}{Definition}[section]
\newcommand{\ra}{\rightarrow}
\newcommand{\bk}{\backslash}
\newcommand{\mc}{\mathcal}
\newcommand{\mb}{\mathbb}
\newcommand{\sg}{\sigma}
\newcommand{\mbf}{\boldsymbol}
\newcommand{\asum}{\sideset{}{^{\ast}}\sum}
\renewcommand{\bar}{\overline}
\renewcommand{\Re}{\textnormal{Re}}
\newcommand{\cond}{\textnormal{cond}}
\renewcommand{\pmod}[1]{\ (\mathrm{mod}\ #1)}
\begin{document}
\title[Multiplicative functions that are close to their mean]{Multiplicative functions that are close to their mean}
\author
{Oleksiy Klurman}
\address{Max Planck Institute for Mathematics, Bonn, Germany and School of Mathematics, University of Bristol, UK}
\email{lklurman@gmail.com}

\author{Alexander P. Mangerel}
\address{Centre de Recherches Math\'ematiques, Universit\'e de Montr\'eal, Montr\'eal, Qu\'ebec, Canada}
\email{smangerel@gmail.com}

\author
{Cosmin Pohoata}
\address{Department of Mathematics, California Institute of Technology, Pasadena,
California, USA}
\email{apohoata@caltech.edu}

\author{Joni Ter\"{a}v\"{a}inen}
\address{Mathematical Institute, University of Oxford, Oxford, UK}
\email{joni.teravainen@maths.ox.ac.uk}
%\subjclass[2010]{11N37, 11N64, 11N13}

\begin{abstract} 
We introduce a simple sieve-theoretic approach to studying partial sums of multiplicative functions which are close to their mean value. This enables us to obtain various new results as well as strengthen existing results with new proofs.

As a first application, we show that
for a  multiplicative function $f : \mb{N} \ra \{-1,1\},$
\begin{align*}
\limsup_{x\to\infty}\Big|\sum_{n\leq x}\mu^2(n)f(n)\Big|=\infty.   
\end{align*}
This confirms a conjecture of Aymone concerning the discrepancy of square-free supported multiplicative functions.

Secondly, we show that a completely multiplicative function $f : \mb{N} \ra \mb{C}$ satisfies
\begin{align*}
\sum_{n\leq x}f(n)=cx+O(1)    
\end{align*}
with $c\neq 0$ if and only if $f(p)=1$ for all but finitely many primes and $|f(p)|<1$ for the remaining primes. This answers a question of Ruzsa.

For the case $c = 0,$ we show, under the additional hypothesis $$\sum_{p }\frac{1-|f(p)|}{p}  < \infty,$$ that $f$ has bounded partial sums if and only if $f(p) = \chi(p)p^{it}$ for some non-principal Dirichlet character $\chi$ modulo $q$ and $t \in \mb{R}$ except on a finite set of primes that contains the primes dividing $q$, wherein $|f(p)| < 1.$ This provides progress on another problem of Ruzsa and gives a new and simpler proof of a stronger form of Chudakov's conjecture.

Along the way we obtain quantitative bounds for the discrepancy of the {\it modified characters} improving on the previous work of Borwein, Choi and Coons.
\end{abstract}

\maketitle

\section{Introduction}
Let $S^1$ denote the unit circle of the complex plane. The characterization of the behavior of partial sums of multiplicative functions is an active object of study in analytic number theory, being an example of a problem in which the multiplicative structure of an additively structured set, namely an interval, is investigated. Considerations involving the extremal behavior of such partial sums, for example characterizing when these are bounded, was of key importance in Tao's solution in \cite{tao_edp} to the Erd\H{o}s Discrepancy Problem, according to which
$$
\sup_{d,N \geq 1} \Big|\sum_{n \leq N} f(dn)\Big| = \infty
$$
for any sequence $f: \mb{N} \ra S^1$. It is not difficult to see that the completely multiplicative functions, for which the sum is insensitive to dilations by $d$, ought to be extremal for this problem, and indeed Tao's proof proceeds by reducing the problem to the case of completely multiplicative functions and then proving the claim in this case. One of the main consequences of \cite{tao_edp} is that
\begin{equation}\label{partial_sum}
\limsup_{x\to\infty}\Big|\sum_{n\le x}f(n)\Big|=\infty
\end{equation}
for any completely multiplicative function $f:\mathbb{N}\to S^1.$

The objective of the present paper is to introduce a new tool for analyzing partial sums of multiplicative functions that allows us to solve several conjectures and to considerably strengthen some of the earlier results with new and arguably more elegant proofs.

We study in particular questions on the characterization of completely multiplicative functions that are \emph{close to their mean}, in the sense that
\begin{equation} \label{eq_hyp}
\sum_{n \leq x} f(n) = cx + O(1)
\end{equation}
for some $c \in \mb{C}.$

Our first main result concerns the case $c=0$ and a problem treated by Aymone in \cite{Ayo} on the Erd\H{o}s discrepancy problem for square-free supported multiplicative sequences. After Tao's proof of~\eqref{partial_sum}, it is natural to ask whether the same conclusion holds when the sum is restricted to certain subsets of the integers. In~\cite{Ayo}, it is proven that if for a multiplicative function $g:\mathbb{N}\to\{-1,+1\}$ we have
\begin{align}\label{eq31}
 \sum_{n\le x}g(n)\mu^2(n)=O(1),
\end{align}
then there exists a real primitive Dirichlet character $\chi$ of conductor $q$ such that $(q,6)=1$ and $g(2)\chi(2)=g(3)\chi(3)=-1$, and moreover,
\begin{align*}\label{eq8}
\sum_{p}\frac{1-g(p)\chi(p)}{p}<\infty.
\end{align*}
Aymone conjectured in \cite[Conjecture 1.1]{Ayo} that no functions $g$ satisfying \eqref{eq31} exist. Using our new approach, we are able to settle his conjecture.
\begin{theorem}[Square-free Erd\H{o}s  discrepancy problem]\label{thm_sfedp}
Let $g:\mathbb{N}\to\{-1,+1\}$ be a multiplicative function. Then
\[ \limsup_{x \ra \infty} \Big|\sum_{n\le x}g(n)\mu^2(n)\Big| = \infty. \]
\end{theorem}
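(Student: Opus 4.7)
The approach is by contradiction. Suppose $T(x) := \sum_{n \le x} g(n)\mu^2(n) = O(1)$. Applying Aymone's theorem cited above yields a real non-principal primitive Dirichlet character $\chi$ modulo $q$ with $(q,6)=1$ such that $g(2)\chi(2) = g(3)\chi(3) = -1$ and the set $\tilde B := \{p : g(p)\chi(p) = -1\}$ (which contains $\{2,3\}$) satisfies $\sum_{p \in \tilde B} 1/p < \infty$. Since $\mu^2$ is supported on squarefrees, I may assume without loss of generality that $g$ is completely multiplicative.

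The key input I would use is an analysis of the ``untwisted'' sum $R(x) := \sum_{n \le x} g(n)\chi(n)\mu^2(n)$. The multiplicative function $g\chi\mu^2$ is supported on squarefrees coprime to $q$, equalling $+1$ on those with an even number of prime factors from $\tilde B$ and $-1$ on those with an odd number. Its Dirichlet series factors as
\[
\sum_n \frac{g(n)\chi(n)\mu^2(n)}{n^s} \;=\; \frac{\zeta(s)}{\zeta(2s)} \prod_{p \mid q} (1 + p^{-s})^{-1} \prod_{p \in \tilde B} \frac{1-p^{-s}}{1+p^{-s}},
\]
which, since $\sum_{p \in \tilde B} 1/p < \infty$, has a simple pole at $s=1$ of positive residue $c>0$. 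By a standard Tauberian argument of Wiener-Ikehara type one then obtains $R(x) = cx + o(x)$, so in particular $R(x) \to \infty$ linearly.

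To complete the contradiction I would show that $T(x) = O(1)$ forces $R(x) = o(x)$. Writing $\chi(n) = \sum_{a \bmod q} \chi(a)\,\mathbf{1}[n \equiv a \pmod q]$ gives $R(x) = \sum_{(a,q)=1} \chi(a)\, T_a(x)$ with $T_a(x) = \sum_{n \le x,\, n \equiv a\,(q)} g(n)\mu^2(n)$. The naive approach via character orthogonality fails here, since the individual $T_a(x)$ may be as large as $x/\phi(q)$ while summing to $O(1)$. The essential new ingredient must be a sieve-theoretic identity specific to the multiplicative structure of $g\mu^2$ that expresses $R(x)$ as a combination of values $T(y)$ at scales $y \le x$ with bounded coefficients, thereby converting $T = O(1)$ at all scales into a bound $R(x) = O(\log x)$ or similar, contradicting $R(x) \sim cx$. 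Such a reduction would rest on the identity $\mu^2(n) = \sum_{d^2 \mid n} \mu(d)$ combined with the Dirichlet series factorization $\sum g(n)\mu^2(n)/n^s = L(s,\chi)/L(2s,\chi_0^{(q)}) \cdot K(s) \cdot P_q(s)$, where $K(s) = \prod_{p \in \tilde B}(1-\chi(p)/p^s)/(1+\chi(p)/p^s)$ and $P_q(s) = \prod_{p \mid q}(1+g(p)/p^s)$.

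The hard part is carrying out this transfer rigorously — particularly understanding $K(s)$ beyond the region $\Re s \geq 1$ of absolute convergence, and aligning the multiplicative structure of $g\mu^2$ with the character orthogonality decomposition so as to extract the requisite cancellation. This is exactly where the paper's new sieve-theoretic methodology should enter, yielding the quantitative discrepancy bounds for modified characters promised in the abstract and generalizing the classical Borwein-Choi-Coons framework (which handles only the case $|\tilde B| = 1$) to the possibly infinite set $\tilde B$ of modifications dictated by Aymone's structure.
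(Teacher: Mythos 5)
Your first step — invoking Aymone's theorem to obtain the real primitive character $\chi$ and the thin exceptional set — matches the paper exactly, and your computation that the twisted sum $R(x)=\sum_{n\le x}g(n)\chi(n)\mu^2(n)$ satisfies $R(x)\sim cx$ with $c>0$ is correct (it follows from Delange's theorem, Lemma \ref{le_hal}, rather than Wiener--Ikehara, since $g\chi\mu^2$ is not nonnegative, but the conclusion stands). After that point, however, your argument has a genuine gap, and the proposed route diverges from the paper in a way that I don't think can be repaired as stated.

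The missing step is the claim that $T(x)=O(1)$ forces $R(x)=o(x)$. You candidly acknowledge that naive orthogonality fails here, and you then speculate that ``the essential new ingredient must be a sieve-theoretic identity'' expressing $R(x)$ in terms of values $T(y)$ at smaller scales with bounded coefficients. No such identity is exhibited, and nothing in the paper works this way. Note also a logical oddity: $R(x)\sim cx$ holds unconditionally (given $\sum_{p\in\tilde B}1/p<\infty$), independently of whether $T(x)$ is bounded, so ``$T(x)=O(1)\Rightarrow R(x)=o(x)$'' is simply a reformulation of the theorem you are trying to prove, not a reduction of it — there is no leverage gained by passing to $R$.

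The paper proceeds quite differently and in two genuinely separate stages, neither of which appears in your proposal. First, Proposition \ref{prop_redaym} uses the \emph{rotation trick}: by a sieve construction (Chinese remainder theorem plus a lower-bound sieve) one produces two intervals $I_H(m)$, $I_H(m')$ of length $H$ such that $f=\mu^2 g$ takes essentially identical values on the two intervals except at $\approx H$ positions, where the values differ by a factor $g\chi(p_j)=-1$ from primes $p_j\in S$; this forces the difference of the two interval sums to be $\geq \ell\asymp H$, contradicting boundedness of the partial sums unless $S=\{p:f(p)\neq\chi(p)\}$ is \emph{finite}. This finiteness is exactly what you would need in order to control your mysterious factor $K(s)$ beyond $\Re s\geq 1$: once $S$ is finite, $K(s)$ becomes a finite Euler product, analytic and nonvanishing for $\Re s>0$. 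Second, Proposition \ref{prop_dirichlet} then attacks the \emph{untwisted} Dirichlet series $F(s)=\sum_n f(n)n^{-s}=P(s)L(s,\chi)\zeta(2s)^{-1}$ directly: if $M_f(x)=O(x^{1/4-\varepsilon})$ then $F$ extends analytically to $\Re s>1/4-\varepsilon$, which forces $L(1/4+i\gamma,\chi)=0$ at every $\gamma$ with $\zeta(1/2+2i\gamma)=0$; Hardy--Littlewood gives $\gg T$ such $\gamma$ in $[-T,T]$, while the functional equation for the real character $\chi$ together with Huxley's zero-density estimate gives only $o(T)$ zeros of $L(\cdot,\chi)$ on $\Re s=1/4$ up to height $T$ — a contradiction. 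This Dirichlet-series-plus-zero-density mechanism is the actual ``new ingredient'' you were searching for, and it operates on $T(x)$ itself, not on $R(x)$.
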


As another application, we answer a question of Ruzsa \cite{ruzsa} on characterizing completely multiplicative functions that are close to their mean in the sense of \eqref{eq_hyp}.

In the case $c\ne 0,$ Ruzsa posed the following problem (see \cite[Problem 21]{ruzsa}).
\begin{prob}[Ruzsa] \label{ques_ruzsa1}
Let $f:\mathbb{N}\to \mathbb{C}$ be a completely multiplicative function that satisfies 
\begin{align*}
\sum_{n\leq x}f(n)=cx+O(1)   
\end{align*}
as $x\to \infty$ for some $c\neq 0.$ Is it true that for every prime $p$ either $|f(p)|<1$ or $f(p)=1$ holds?
\end{prob}
Our next result confirms this, in fact in a stronger form that gives a complete characterization of completely multiplicative functions that are close to their mean, provided that the mean is non-zero.

\begin{theorem}\label{theo_ruzsa}
A completely multiplicative function $f:\mathbb{N}\to \mathbb{C}$ satisfies 
\begin{align}\label{eq1}
\sum_{n\leq x}f(n)=cx+O(1)    
\end{align}
as $x\to \infty$ for some $c\neq 0$ if and only if there exists a finite set $S$ of primes such that $|f(p)|<1$ for $p\in S$ and $f(p)=1$ for $p\notin S$.
\end{theorem}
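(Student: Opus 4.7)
My plan is as follows. For the ``if'' direction I would carry out a direct sieve computation: with $Q = \prod_{p \in S} p$, note that $f(n)$ depends only on the $S$-part of $n$ (since $f(p)=1$ off $S$), so by inclusion-exclusion,
\[
\sum_{n \leq x} f(n) = \sum_{\substack{m \leq x \\ m\ S\text{-smooth}}} f(m)\,\#\{k \leq x/m : (k,Q) = 1\} = \frac{x\,\phi(Q)}{Q}\sum_{m\ S\text{-smooth}} \frac{f(m)}{m} + O(1),
\]
where the $O(1)$ uses $\sum_{m\ S\text{-smooth}}|f(m)| = \prod_{p \in S}(1-|f(p)|)^{-1} < \infty$ together with a Rankin's-trick bound for the tail $x\sum_{m > x,\ S\text{-smooth}} f(m)/m$; both are finite thanks to $|f(p)| < 1$ for $p \in S$.

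For the ``only if'' direction, I would first deploy a shift argument: differencing $\sum_{n \leq N+1}f(n) - \sum_{n \leq N}f(n)$ gives $f(N+1) = c + O(1)$, so $f$ is uniformly bounded, and specializing $N = p^k$ forces $|f(p)| \leq 1$ for every prime $p$. I would then set $S = \{p : f(p) \neq 1\}$ and write the Dirichlet convolution $f = \mathbf{1} \ast h$, where $h$ is multiplicative with $h(p^k) = f(p)^{k-1}(f(p)-1)$ and is therefore supported on $S$-smooth integers. The hypothesis then rewrites as the identity
\[
\sum_{d \leq x} h(d)\,\lfloor x/d\rfloor = cx + O(1).
\]

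The crux of the proof (and the main obstacle) will be to exploit this identity, together with the sieve-theoretic tools of the paper, to deduce the absolute convergence $\sum_d |h(d)| < \infty$. Splitting $\lfloor x/d\rfloor = x/d - \{x/d\}$ isolates a main term $cx$ (matching provided $\sum_d h(d)/d = c$), and leaves a discrepancy $-\sum_{d \leq x} h(d)\{x/d\} - x\sum_{d > x}h(d)/d$ that must be $O(1)$ uniformly in $x$. The hard part is to rule out the possibility that this discrepancy stays $O(1)$ purely through cancellation (which \emph{a priori} could occur even when $\sum_d |h(d)| = \infty$); this is precisely where the paper's novel sieve-theoretic ideas should enter, presumably by testing the identity along well-chosen sequences of $x$ that break any such hypothetical cancellation.

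Once absolute summability of $h$ is in hand, the proof would conclude by factorization:
\[
\sum_d |h(d)| = \prod_{p \in S}\Big(1 + \frac{|f(p)-1|}{1-|f(p)|}\Big),
\]
where the local factor is $+\infty$ whenever $|f(p)| = 1$; convergence therefore forces $|f(p)| < 1$ for every $p \in S$. Moreover, the reverse triangle inequality $|f(p)-1| \geq 1 - |f(p)|$ makes each local factor at least $2$, so the product converges only when $|S| < \infty$. This completes the argument.
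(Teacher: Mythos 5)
Your ``if'' direction is correct, and while it is organized a bit differently from the paper (which applies Lemma \ref{le_perturb} to reduce to $f=1_{(\cdot,Q)=1}$ and then evaluates directly), the sieve computation you describe is sound. Your preliminary steps in the ``only if'' direction are also fine: differencing to bound $f$, specializing at prime powers to get $|f(p)|\le 1$, the factorization $f=\mathbf{1}\ast h$ with $h(p^k)=f(p)^{k-1}(f(p)-1)$, and the observation that $\sum_d|h(d)|<\infty$ forces both $|f(p)|<1$ on $S$ (else an infinite local factor) and $|S|<\infty$ (else a divergent product of factors $\ge 2$). That intermediate target is indeed equivalent to the desired conclusion.

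However, the deduction of $\sum_d|h(d)|<\infty$ from $\sum_{d\le x}h(d)\lfloor x/d\rfloor = cx+O(1)$ is precisely where the entire content of the theorem lies, and your proposal explicitly leaves it as a black box (``testing the identity along well-chosen sequences of $x$''). This is a genuine gap, not a detail. The paper does not pass through your intermediate statement directly; it proceeds in three stages, each with real input. First, Hal\'asz's and Delange's theorems (Lemma \ref{le_hal}) are applied to show that a nonzero mean forces $\mathbb{D}(f,1;\infty)<\infty$ (Lemma \ref{le2}); without this pretentiousness step one has no structural handle on $h$ at all, and your proposal never invokes Hal\'asz. Second, assuming $\{p: f(p)\ne 1\}$ is infinite, the rotation trick of Proposition \ref{all_finite} (run with trivial character and $t=0$, using the concentration inequality of Lemma \ref{concentrat} together with CRT and a choice of prime powers in a sector) produces two translates of a short interval whose $f$-sums must differ by an unbounded amount, contradicting \eqref{eq1}. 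Third, once the exceptional set is finite, Lemma \ref{le_perturb} reduces to the modified principal character $\chi^{*}_{r,z}$ with $|z|=1$, and Lemmas \ref{zero_sum} and \ref{zero_sumfinal} (a digit recursion followed by a second, simpler rotation argument) show this cannot satisfy \eqref{eq1} unless $z=1$. None of this machinery is reproduced or replaced in your proposal; the phrase ``break any such hypothetical cancellation'' gestures at the rotation trick but does not supply the construction. As written, the proof is a correct reformulation of the goal together with the easy endgame, with the central argument missing.
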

In the same paper, Ruzsa also asked for a plausible characterization of completely multiplicative functions with bounded partial sums, i.e., the case $c = 0$ of Problem \ref{ques_ruzsa1} (see Problem 22 of \cite{ruzsa}). We make progress in this direction by finding a characterization of such completely multiplicative functions when we restrict the set where $|f(p)|$ is small.

\begin{theorem}\label{thm_bddsums}
Let $f : \mb{N} \ra \mb{C}$ be a completely multiplicative function such that 
\begin{align}\label{eq_thin}
\sum_p\frac{1-|f(p)|}{p}<\infty.    
\end{align}
Then $f$ satisfies
\begin{equation}\label{partial_sum2}
\limsup_{x \ra \infty} \Big|\sum_{n \leq x} f(n)\Big| < \infty
\end{equation}
if and only if there is a Dirichlet character $\chi$, a real number $t$ and a finite set $S$ of primes such that $f(p) = \chi(p)p^{it}$ if $p \notin S$, and $|f(p)| < 1$ if $p \in S.$ 
\end{theorem}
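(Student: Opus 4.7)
Suppose $f$ has the stated structural form. The character $\chi$ must be non-principal: if $\chi$ were principal, Wirsing's formula (as used in the proof of Theorem~\ref{theo_ruzsa}) would give $\sum_{n \leq x} f(n) \sim cx$ with $c \neq 0$, contradicting boundedness. After enlarging $S$ to contain the primes dividing the conductor $q$ of $\chi$, set $P = \prod_{p \in S} p$ and factor $n = ab$ with $a$ supported on $S$-primes and $(b,P) = 1$, giving
$$\sum_{n \leq x} f(n) = \sum_{\substack{a \geq 1 \\ p \mid a \Rightarrow p \in S}} f(a) \sum_{\substack{b \leq x/a \\ (b,P) = 1}} \chi(b) b^{it}.$$
The outer sum converges absolutely since $|f(p)| < 1$ on the finite set $S$. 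For the inner sum, $b \mapsto \chi(b) \mathbf{1}_{(b,P)=1}$ is a non-principal character modulo $\mathrm{lcm}(q,P)$, whose partial sums are bounded and periodic; Abel summation combined with a Fourier expansion of these partial sums then yields uniform boundedness of the $b^{it}$-twist.

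\textbf{Necessity.} Assume $\sum_{n \leq x} f(n) = O(1)$ and $\sum_p (1 - |f(p)|)/p < \infty$. My plan has three steps. \emph{Step 1}: Apply Halász's theorem in the quantitative Granville--Soundararajan form. Since the mean value of $f$ vanishes, there must exist a Dirichlet character $\chi$ of conductor $q$ and a real $t$ with
$$\sum_p \frac{1 - \Re(f(p) \bar\chi(p) p^{-it})}{p} < \infty;$$
combining with $\sum_p (1 - |f(p)|)/p < \infty$ yields the $\ell^2$-closeness $\sum_p |f(p) - \chi(p) p^{it}|^2/p < \infty$. \emph{Step 2}: Define $g(n) = f(n) \bar\chi(n) n^{-it}$ on integers coprime to $q$; then $g$ is completely multiplicative and pretends to the constant $1$, and the objective is to establish
$$\sum_{\substack{n \leq x \\ (n,q) = 1}} g(n) = c' x + O(1) \quad \text{for some } c' \neq 0.$$
This would follow from a careful study of the factorization $F(s) = L(s - it, \chi) \cdot \tilde G(s)$ of the Dirichlet series, exploiting the analyticity and polynomial growth of $F(s) = \sum_n f(n)/n^s$ in $\Re(s) > 0$ (a consequence of bounded partial sums by Abel summation) together with entireness of $L(s - it, \chi)$. \emph{Step 3}: Apply Theorem~\ref{theo_ruzsa} to $g$, viewed as a completely multiplicative function on the submonoid of integers coprime to $q$. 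The nonvanishing of $c'$ forces $g(p) = 1$ for all but finitely many primes, equivalently $f(p) = \chi(p) p^{it}$ outside a finite set $S$, with $|f(p)| < 1$ on $S$ by the pretentious condition.

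\textbf{Main obstacle.} The crux is Step~2: transferring the bounded partial sums of $f$ into a $c'x + O(1)$ asymptotic for $g$ with nonvanishing main term. The map $f \mapsto g = f\bar\chi\, n^{-it}$ does not preserve boundedness of partial sums pointwise, so Dirichlet-series analysis near the line $\Re(s) = 0$ is delicate; moreover, ruling out the degenerate case $c' = 0$ amounts to excluding an unexpected cancellation in the Euler product of $g$ at $s = 1$ beyond what the $\ell^2$-closeness of $g$ to $1$ readily provides. This is precisely where the sieve-theoretic methodology announced in the paper's abstract is expected to intervene: instead of working purely through Dirichlet series, one employs an explicit sieve identity to isolate the contribution of $\chi(n) n^{it}$ in $\sum_{n \leq x} f(n)$ and to extract the mean value of $g$ as a bounded remainder.
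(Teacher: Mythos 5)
Your \emph{Step 1} contains a genuine error. Halász's theorem (and its Granville--Soundararajan quantitative forms) runs in the opposite direction to what you need: it says that if $f$ does \emph{not} pretend to any $\chi(n)n^{it}$, then the mean value of $f$ decays to zero. From the hypothesis $\sum_{n\le x}f(n)=O(1)$ — which gives a vanishing mean value — Halász yields no information at all about pretentiousness; there are vast families of multiplicative functions with mean value $o(1)$ that pretend to nothing. The paper's Lemma~\ref{le_pret} accomplishes the reduction to the pretentious case by a different and much deeper route: expanding the logarithmically-weighted second moment $\frac{1}{\log x}\sum_{n\le x}\frac{1}{n}\bigl|\sum_{n\le m\le n+H}f(m)\bigr|^2$, using \eqref{eq_thin} (equivalently \eqref{eq_logmean}) to lower-bound the diagonal, and then invoking Tao's logarithmic two-point Elliott conjecture \cite[Theorem 1.3]{tao} to extract the pretentious distance bound. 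This is not cosmetic: replacing that input by Halász is a fundamental gap.

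Your \emph{Step 2} you yourself flag as the ``main obstacle,'' and indeed it is not established; the sketch via the factorization $F(s)=L(s-it,\chi)\tilde G(s)$ does not yield an asymptotic of the shape $c'x+O(1)$ with $c'\neq 0$ for the twisted function $g$, and your own remark about the degenerate case $c'=0$ identifies exactly where the argument is missing. More importantly, this is not how the paper proceeds. The paper never attempts to manufacture a nonzero mean value for $g$ and feed it into Theorem~\ref{theo_ruzsa}; that would in any case be circular, since knowing $\sum_{(n,q)=1}g(n)=c'x+O(1)$ with $c'\neq 0$ is essentially equivalent to already knowing the structure one wants to prove. Instead, Proposition~\ref{all_finite} uses the ``rotation trick'': via the Chinese Remainder Theorem and the Tur\'an--Kubilius-type concentration inequality (Lemma~\ref{concentrat}), one constructs two nearby short intervals on which the partial sums of $f$ are forced apart by a bounded-below amount (see \eqref{finalsum}) whenever the exceptional set $\{p: f(p)\neq\chi(p)p^{it}\}$ is infinite, contradicting bounded partial sums. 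The remaining case where $|S|<\infty$ but some $|f(p)|=1$ on $S$ (the modified-character case, Proposition~\ref{generalized_quant}) is handled separately by Lemma~\ref{zero_except1}, a base-$r$ digit-expansion identity (Lemma~\ref{zero_sum}), and one further application of the rotation trick (Lemma~\ref{zero_sumfinal}); this case is not subsumed by an appeal to Theorem~\ref{theo_ruzsa}.

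Your forward (``if'') direction is essentially sound and parallel in spirit to the paper's (which packages the reduction more cleanly in Lemma~\ref{le_perturb} before treating $\chi'(n)n^{it}$ via Taylor expansion and orthogonality), but as it stands the necessity direction has two structural gaps: the Halász-for-Elliott substitution in Step 1, and the unproved transfer in Step 2, which is precisely where the paper's novel sieve-theoretic rotation trick replaces the Dirichlet-series program you describe.
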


One can state our condition \eqref{eq_thin} in the equivalent form
\begin{align}\label{equ11}
\sum_{n\leq x}|f(n)|=\Omega(x);    
\end{align}
see the proof of Lemma \ref{le_pret} for this equivalence. 

\begin{remark}
In Section \ref{sec_exs}, we give a series of examples that show that the assumption \eqref{eq_thin} on $|f(p)|$ in Theorem \ref{thm_bddsums} is necessary in the sense that without it there are uncountably many functions $f$ (in fact, cardinality $\mathfrak{c}=|\mathbb{R}|$, even when restricted to functions taking values in $\mathbb{Q}\cap [-1,1]$) violating the statement. We also observe in that section that relaxing the assumption \eqref{eq_thin} (or \eqref{equ11}) on $f$ even slightly would essentially require obtaining a generalization of Tao's two-point Elliott conjecture \cite{tao} to ``sparse'' multiplicative functions (and also with uniformity in the shifts), which to date has not been achieved. It therefore seems that Theorem~\ref{thm_bddsums} is the strongest result that can be obtained on Ruzsa's problem \cite[Problem 22]{ruzsa} without making progress on challenging conjectures on correlations of multiplicative functions.
\end{remark}

Theorems \ref{theo_ruzsa} and \ref{thm_bddsums} allow us to deduce a refinement of Chudakov's conjecture \cite{chu}, \cite{chu2} proven by the first two authors in \cite{klu_man} (the case $c \neq 0$ was treated earlier by Glazkov \cite{gla}). This proof in particular avoids the use of correlation formulas for pretentious multiplicative functions and the delicate analysis of Euler products that results from using them in~\cite{klu_man}.

\begin{cor}[Refinement of Chudakov's Conjecture] \label{cor_glaz}
Let $f: \mb{N} \ra \mb{C}$ be a completely multiplicative function satisfying \eqref{eq1}. \\
a) If $c \neq 0$ and $|f(n)|\in \{0,1\}$ for all $n$, then there is an integer $q \in \mb{N}$ such that $f$ is the principal Dirichlet character modulo $q$. \\
b) If $c = 0$, and $|f(n)|\in \{0,1\}$ for all $n$, and additionally if $S=\{p:\,\, f(p)=0\}$ satisfies $\sum_{p\in S}1/p<\infty$, then there exist $t \in \mb{R}$ and $q \in \mb{N}$ such that  $f(n) = \chi(n)n^{it}$ for some Dirichlet character $\chi$ modulo $q$.
\end{cor}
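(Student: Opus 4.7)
The plan is to derive Corollary \ref{cor_glaz} directly from the two main characterization theorems, the main work being to verify that the hypotheses apply and that the boolean constraint $|f(n)|\in\{0,1\}$ forces the exceptional sets to be compatible with a genuine Dirichlet character.

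For part (a), I would apply Theorem \ref{theo_ruzsa} to $f$. The theorem produces a finite set $S$ of primes with $f(p)=1$ for $p\notin S$ and $|f(p)|<1$ for $p\in S$. Because $|f(p)|\in\{0,1\}$, the strict inequality $|f(p)|<1$ forces $f(p)=0$ for $p\in S$. Setting $q=\prod_{p\in S} p$ and extending by complete multiplicativity, one sees that $f(n)=1$ when $(n,q)=1$ and $f(n)=0$ otherwise, i.e.\ $f$ is the principal character modulo $q$. This part is essentially a one-line translation.

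For part (b), the natural move is to invoke Theorem \ref{thm_bddsums}. First I must verify hypothesis \eqref{eq_thin}: since $1-|f(p)|\in\{0,1\}$ with equality $1$ exactly when $p\in S$, the assumed convergence $\sum_{p\in S}1/p<\infty$ is precisely \eqref{eq_thin}. The theorem then delivers a Dirichlet character $\chi$ modulo some $q$, a real number $t$, and a finite set $S'$ such that $f(p)=\chi(p)p^{it}$ on $p\notin S'$ and $|f(p)|<1$ on $S'$. Again $|f(p)|\in\{0,1\}$ collapses the latter to $f(p)=0$ on $S'$, so $S'\subseteq S$.

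The one technical point is that $S$ may strictly contain $S'$ (there could be primes $p\notin S'$ that divide $q$, forcing $\chi(p)=0$ and hence $f(p)=0$, i.e.\ $p\in S$) and conversely the primes in $S$ need not all divide $q$. To produce a single character that matches $f$ at every prime, I would set $q'=\mathrm{lcm}(q,\prod_{p\in S}p)$ and view $\chi$ as a character modulo $q'$ by declaring $\chi'(n)=\chi(n)$ when $(n,q')=1$ and $0$ otherwise. Then for $p\in S$ one has $p\mid q'$, so $\chi'(p)p^{it}=0=f(p)$; and for $p\notin S$ one has $p\notin S'$ and $p\nmid q$ (else $f(p)=0$), hence $\chi'(p)=\chi(p)$ and $\chi'(p)p^{it}=f(p)$. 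Complete multiplicativity then yields $f(n)=\chi'(n)n^{it}$ for all $n$. The only real step here is this bookkeeping about moduli; there is no analytic obstacle since the heavy lifting is done by Theorem \ref{thm_bddsums}, whose hypothesis \eqref{eq_thin} is trivially satisfied under the $\{0,1\}$-valued assumption on $|f|$ combined with the convergence of $\sum_{p\in S}1/p$.
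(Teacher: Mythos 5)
Your proposal is correct and follows essentially the same route as the paper: apply Theorem~\ref{theo_ruzsa} for part (a) and Theorem~\ref{thm_bddsums} for part (b), then use $|f(n)|\in\{0,1\}$ to upgrade $|f(p)|<1$ to $f(p)=0$ on the exceptional set. The only difference is cosmetic: you spell out the modulus bookkeeping (passing to $q'=\mathrm{lcm}(q,\prod_{p\in S}p)$ and the induced character $\chi'$) which the paper compresses into the phrase ``$\chi'$ is induced by $\chi$''.
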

We remark that previously the works on the subject of classification of multiplicative functions \cite{tao_edp},\cite{Ayo},\cite{dkm},\cite{klu},\cite{klu_man},\cite{Orbits},\cite{dkm1} crucially used much stricter assumptions on the range of $f$ for all $n,$ that is, the range of $f$ was assumed to be finite and also importantly it was required that $f(p)=0$ for only finitely many primes.

Another consequence of our approach is a quantitative statement about the discrepancy of a class of multiplicative functions. We call functions in this class \emph{modified characters} (corresponding to the twisted character $\chi(n)n^{it}$), and they are defined as completely multiplicative functions $f:\mathbb{N}\to \mathbb{U}$ such that $f(p)=\chi(p)p^{it}$ for some Dirichlet character $\chi$ and for all but finitely many primes $p$. Here, $\mb{U}$ denotes the closed unit disc in $\mb{C}$.

As discussed in~\cite{tao_edp}, modified characters (taking values in $\{-1,+1\}$) are believed to be ``extremal" in the Erd\H{o}s discrepancy problem. In particular, if $f$ is a $\pm 1$ valued modified character that differs from a character $\chi$ at only one prime, then a result of Borwein, Choi and Coons \cite{borwein-choi-coons} shows that the maximal partial sums $\sup_{y\leq x}|\sum_{n\leq y}f(n)|$ are $\asymp \log x$, and this is believed to be the lowest possible growth rate of the partial sums for any completely multiplicative $f:\mathbb{N}\to \{-1,+1\}$. Our next corollary confirms the same lower bound for the growth rate for all modified characters.

\begin{cor}
Let $f:\mathbb{N}\to\mathbb{U}$ be a modified character corresponding to $\chi(n)n^{it}$ such that for some prime $r$ we have $f(r)\neq \chi(r)r^{it}$ and $|f(r)|=1$. Then
\begin{align}\label{discbound}
\sum_{n\le x} f(n)=\Omega(\log x).
\end{align}
\end{cor}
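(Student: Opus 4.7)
The plan is to analyze the Dirichlet series $F(s):=\sum_{n\ge 1}f(n)n^{-s}$ and extract the $\log x$ lower bound from its singularities on the line $\Re s = 0$. Let $T$ be the finite set of primes with $f(p)\neq\chi(p)p^{it}$; then
$$F(s)=L(s-it,\chi)\cdot H(s), \qquad H(s):=\prod_{p\in T}\frac{1-\chi(p)p^{it-s}}{1-f(p)p^{-s}}.$$
If $\chi$ is principal, then $L(s-it,\chi)$ has a simple pole at $s=1+it$, and a direct Perron computation gives $|S(x)|\asymp x\gg\log x$. I therefore focus on $\chi$ non-principal, so $L(s-it,\chi)$ is entire of polynomial vertical growth. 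The hypothesis $|f(r)|=1$, $f(r)\neq\chi(r)r^{it}$, then produces an infinite sequence of simple poles of $F$ on $\Re s=0$, at $s_k:=i(\theta_r+2\pi k)/\log r$, $k\in\mathbb{Z}$, where $\theta_r:=\arg f(r)$: the numerator $1-\chi(r)r^{it-s_k}=1-\chi(r)r^{it}\overline{f(r)}$ is nonzero precisely because $f(r)$ and $\chi(r)r^{it}$ are distinct points on the unit circle, so these are genuine poles of $F$.

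Applying the truncated Perron formula at height $T_0:=x(\log x)^2$ and shifting the contour to $\Re s=-\delta$ for small $\delta>0$ (the shifted vertical integral is $O(x^{-\delta})=o(1)$ and the horizontal segments $o(1)$ by convexity bounds for $L$) yields
$$S(x)=\sum_{|k|\le K}\rho_k\,\frac{x^{s_k}}{s_k}+o(1),\qquad K\asymp T_0\log r,$$
where the residues are
$$\rho_k=\frac{1-\chi(r)r^{it}\overline{f(r)}}{\log r}\,L(s_k-it,\chi)\,P(s_k),\qquad P(s):=\prod_{p\in T\setminus\{r\}}\frac{1-\chi(p)p^{it-s}}{1-f(p)p^{-s}}.$$
The decisive feature is that evaluating at $x=r^N$ makes $x^{s_k}=e^{iN\theta_r}$ \emph{independent of $k$}, collapsing the residue sum to a pure phase times the $x$-independent Mellin partial sum $\Sigma_K:=\sum_{|k|\le K}\rho_k/s_k$. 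Thus the corollary reduces to the statement $|\Sigma_{K(N)}|\gg N$ as $N\to\infty$.

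The main obstacle is precisely this lower bound on $\Sigma_K$. A naive $L^2$/Plancherel estimate for $\int_1^X|S(x)|^2\,dx/x^2$ gives only $\max|S(x)|\gg\sqrt{\log x}$, short of the required $\log x$ by a square root. To close the gap I would use the equivalent telescoping identity
$$S(r^Nm)-f(r)\,S(r^{N-1}m)=\sum_{\ell\le r^Nm,\,(\ell,r)=1}f(\ell),$$
iterated $N$ times to give $S(r^Nm)=\sum_{k=0}^N f(r)^k\,U_f(r^{N-k}m)$, where $U_f$ is the coprime-to-$r$ partial sum of $f$; by Theorem \ref{thm_bddsums} applied to the restriction of $f$ to integers coprime to $r$ (on $T\setminus\{r\}$ one may assume $|f(p)|<1$, else one applies the present argument to such a prime in place of $r$), the function $U_f$ is uniformly bounded. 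One then chooses $m$ via Dirichlet approximation so that the sequence $\{U_f(r^jm)\}_{j\le N}$ aligns in phase with $\{\overline{f(r)}^j\}_{j\le N}$ for a positive proportion of indices $j$; this is the delicate step, where the Bohr-set structure of $\{f(r)^j\}$ (or, when $f(r)$ is a root of unity, its exact periodicity combined with that of $r^j\bmod q$) is essential, and is the natural generalization of the Borwein--Choi--Coons analysis from the one-prime, $\pm 1$ setting to complex modifications with an arbitrary twist $n^{it}$. Once this alignment is achieved one obtains $|S(r^Nm)|\gg N\asymp\log(r^Nm)$, completing the proof.
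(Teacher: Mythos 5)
There is a genuine gap, in fact two. First, the Perron step does not close. To detect a quantity of size $\log x$, all error terms must be $o(\log x)$. But on $\Re s=-\delta$, the convexity bound via the functional equation gives $|L(-\delta+iu-it,\chi)|\ll (1+|u|)^{1/2+\delta+\epsilon}$, so the shifted vertical integral is of size $\gg x^{-\delta}T_0^{1/2+\delta+\epsilon}$; with $T_0=x(\log x)^2$ this is $\asymp x^{1/2+\epsilon}$, nowhere near $o(1)$, and the horizontal segments have the same problem. Your claim that these are $o(1)$ ``by convexity'' is not correct, and without strong subconvexity input the residue representation you write is not available. Note also that $M_f(x)=o(\log x)$ is \emph{not} by itself incompatible with simple poles of $F$ on $\Re s=0$ (from $F(s)=s\int_1^\infty M_f(x)x^{-s-1}dx$ one gets $|F(\sigma+i\tau_k)|=o(|\tau_k|/\sigma^2)$, consistent with a $1/\sigma$ pole), so the bare existence of poles gives nothing quantitative. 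Second, the phase-alignment step that you correctly identify as the real obstacle is only asserted, not carried out; aligning $\{U_f(r^jm)\}_{j\le N}$ with $\{\overline{f(r)}^j\}_{j\le N}$ on a positive proportion of $j$ involves the simultaneous behavior of $U_f$ near the points $r^jm$, the residues $r^jm\bmod q$, and the orbit $f(r)^j$ on the circle, and there is no off-the-shelf Diophantine device that does this; it is essentially the whole content of the problem. The Borwein--Choi--Coons argument you invoke is special to the $\{-1,+1\}$-valued case where there is no phase to align.

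The recursion you wrote down, $S(r^Nm)=\sum_k f(r)^k U_f(r^{N-k}m)$, is the right identity and is exactly \eqref{iterate} in Lemma~\ref{zero_sum}, but the paper uses it in the contrapositive and thereby avoids both obstacles above. After the reduction of Lemma~\ref{zero_except1} (to $t=0$ and a single exceptional prime, via Lemma~\ref{le_perturb}), one assumes \eqref{discbound} fails, i.e.\ $\Sigma(x)=o(\log x)$. Iterating \eqref{iterate} over a base-$r^K$ expansion $r^{m_JK}A+\cdots+A$ and using only the equidistribution of the single orbit $\{z^{mK}\}_m$ (not of the complicated quantities $U_f(r^jm)$) shows that any nonzero value $\Sigma(A)$ with $q\mid A$ would make $\Sigma$ grow like $\log x$; hence $\Sigma(mq)=0$ for every $m$ (Lemma~\ref{zero_sum}). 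This rigid identity is then destroyed by the rotation trick of Lemma~\ref{zero_sumfinal}: two length-$q$ blocks are chosen by the Chinese remainder theorem so that $v_r$ agrees termwise except at a single position, and the resulting cancellation forces $z=\chi(r)$, a contradiction. No Dirichlet series, no contour shift, and no phase alignment are needed.
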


The result of \cite{borwein-choi-coons} corresponds to the case $|S|=1$. We note that the proof method of \cite{borwein-choi-coons}, which is based on looking at base $p_0$ expansions for the unique prime such that $f(p)\neq \chi(p)$ for $p\neq p_0$, does not work in the case $|S|\geq 2$. Indeed, relatively little is known about representations of integers simultaneously in two different bases, and therefore we needed a different approach. We conjecture that the optimal growth rate for the sum in \eqref{discbound} is $\Omega((\log x)^{|S|})$.

\subsection{Proof ideas}

The proofs of all of the results mentioned above are crucially based on different variants of a new sieve-theoretic approach to analyzing partial sums of multiplicative functions. We call this approach the \emph{rotation trick}; we explain the method and its naming below.

Let us first recall how the work of Tao~\cite{tao_edp} and subsequent works \cite{klu},~\cite{klu_man},~\cite{Orbits} of a subset of the authors analyze partial sums; we then compare this to our new approach. 

All the works mentioned above start by applying Tao's two-point Elliott conjecture from \cite{tao} to reduce to the case of pretentious functions.
The next step in these works is a second moment argument to show that for $f:\mathbb{N}\to S^1$ multiplicative,
\begin{align}
\label{eq13}
\frac{1}{x}\sum_{n\le x}\Big|\sum_{n\le m\le n+H}f(m)\Big|^2
\end{align}
is large for appropriately chosen $H$ unless $f$ is of a very special form (for example, a modified character). In the case of \cite{tao_edp}, this reduction is achieved with a technical Dirichlet series argument that requires $f$ not to take the value $0$. In the case of~\cite{klu_man} that deals more generally with multiplicative functions that can take finitely many zero values, the approach is based on expanding out the mean square and using formulas for correlations of pretentious functions from~\cite{klu} and a delicate analysis of the arising Euler product factors. 

This second moment approach does not work for our main results in the pretentious case, and so we avoid it by appealing to the rotation trick. What the rotation trick enables us to do is to reduce the case of arbitrary pretentious multiplicative functions to the more manageable case of modified characters. The remaining case of modified characters is not trivial though, and we solve it as follows:
\begin{itemize}
 
\item In the case of Theorem \ref{thm_bddsums}, we exploit some recursive properties of the partial sums in terms of the digits of $x$ in a suitable base, and then appeal to the rotation trick again; see Lemmas \ref{zero_sum} and \ref{zero_sumfinal}.   

\item  In the case of Theorem \ref{thm_sfedp}, we use a soft Dirichlet series argument and zero density estimates that involve bounding the number of common zeros of $L(s,\chi)$ and $\zeta(2s)$; see Proposition \ref{prop_dirichlet} for the details.

\end{itemize}

Let us outline how our rotation trick works in the proof of Theorem \ref{thm_bddsums} to reduce matters from the pretentious case to the modified character case. 

Assuming for the sake of contradiction that there exist infinitely many primes $p$ for which $f(p)\ne \chi(p)p^{it}$ (and assuming $t=0$ for simplicity), we use these primes via the lower bound sieve and the Chinese remainder theorem to construct two short intervals $I$ and $I'$ of length $H$ such that
\begin{align}\label{eq14}
\Big|\sum_{n\in I}f(n)-\sum_{n\in I'}f(n)\Big| \end{align}
is large; this in particular implies that $f$ must exhibit large partial sums, which leads to a contradiction with the assumption of $f$ having bounded partial sums.

Here we describe a rather simplified version of the rotation trick for the function $f:\mathbb{N}\to\mathbb{U}$ in the case that the set $S:=\{p:\,\, f(p)\ne \chi(p)\}$ is thin\footnote{We say that a set of primes $S$ is thin if $\sum_{p \in S} 1/p < \infty$.}. Note that $S$ is a zero-dimensional set from the point of view of sieve theory, and hence sieving works extremely well for this set. We can thus find many integers $m$ for which all the numbers in $[(H!)^2m, (H!)^2m+H]$ have no prime factors from $S\cap [H+1,\infty)$. Letting $p_1,\ldots, p_l>H$ be distinct primes from $S$ and $r_1,\ldots, r_l\in [1,H]$, $k_1,\ldots, k_l\in \mathbb{N}$ be parameters, we can similarly find many $m'$ such that for all $1\leq j\leq l$ we have $p_j^{k_j}\mid \mid (H!)^2m'+r_j$, and such that for all $1\leq r\leq H$ the condition $p\mid (H!)^2m'+r$, $p\in S\cap [H+1,\infty)$ implies $r=r_j$ and $p=p_j$ for some $j$. Now, it is not difficult to see that for $1\leq r\leq H$, $r\neq r_j$, the numbers $(H!)^2m+r$ and $(H!)^2m'+r$ have the same prime factors from $S$ with the same multiplicities, and thus
\begin{align*}
f((H!)^2m+r)=f((H!)^2m'+r),   \quad \textnormal{when}\quad r\neq r_j ,
\end{align*}
since for $p\not \in S$ we already have $f(p)=\chi(p).$ We similarly see that for $r=r_j$ the numbers $(H!)^2m+r$ and $(H!)^2m'+r$ have the same prime factors from $S$ with the same multiplicities, apart from the prime $p_j$ that only divides the latter number and divides it to power $k_j$. Thus
\begin{align*}
f((H!)^2m+r)=(f(p_j)\bar{\chi}(p))^{k_j}f((H!)^2m'+r),\quad \textnormal{when}\quad r=r_j.
\end{align*}
Now, taking $I=[(H!)^2m,(H!)^2+H]$, $I'=[(H!)^2m', (H!)^2m'+H]$, \eqref{eq14} becomes
\begin{align}\label{eq17}
\Big|\sum_{1\leq j\leq l}(1-(f(p)\bar{\chi}(p))^{k_j})f((H!)^2m+r_j)\Big|,    
\end{align}
and this can easily be made large by choosing large $k_j$ and $r_j$ appropriately so that the corresponding values of $f$ and $1-(\chi(p)f(p_j))^{k_j}$ all point in approximately the same direction and are bounded away from zero (using $f(p_j)\neq \chi(p_j)$). We call the underlying idea a rotation trick, since effectively we have used a small set of primes to rotate the terms in the sums over $n\in I$ and $n\in I'$ to point in opposite directions. 

Let us mention that in the general case, where $f$ pretends to be $\chi$ but $\{p:\,\,f(p)\neq \chi(p)\}$ is not thin,  we make use of concentration inequalities (see Lemma~\ref{concentrat} for the details) to guarantee that $f(Qn+a)$ is very ``close" to $\chi(a)$ for many $n\le x$ and appropriately chosen large $Q.$

We note that the proof of Theorem~\ref{theo_ruzsa} does not require the use of Tao's two-point correlation result, but instead reduces to the pretentious case using a version of Hal\'{a}sz's theorem. In the proof of Theorem \ref{thm_sfedp}, we also follow similar ideas as above, but establish the largeness of \eqref{eq17} for suitable $k_j$ and $r_j$ in a slightly different manner. In the case of either theorem, we finish the proof by handling the modified character case as explained above.

{\bf Acknowledgments.} The authors would like to warmly thank Marco Aymone for suggesting that our initial argument could lead to a proof of Theorem~\ref{thm_sfedp}. We are grateful to the referee for a careful reading of the paper and many helpful comments. The fourth author was supported by a Titchmarsh Fellowship from the University of Oxford. Finally, O.K. greatly acknowledges support and excellent working conditions at the Max Planck Institute for Mathematics (Bonn).

\subsection{Notation}\label{subsec_notation} The notation $f(x)=\Omega(g(x))$ stands for $\limsup_{x\to \infty} |f(x)|/|g(x)|>0$.

We denote by $S^1$ and $\mathbb{U}$ the unit circle and the closed unit disc of the complex plane, respectively. By $v_p(n)$ we denote the largest $k$ such that $p^k\mid n$. The notation $(a,b)$ stands for the greatest common divisor of $a$ and $b$, and more generally, if $S\subset \mathbb{N}$ is a set, then $(n,S):=\max_{s\in S}(n,s)$. Finally, we make use of the pretentious distance of Granville and Soundararajan
\begin{align*}
\mathbb{D}(f,g;y,x):=\Big(\sum_{y<p\leq x}\frac{1-\textnormal{Re}(f(p)\overline{g(p)})}{p}\Big)^{1/2},\quad \quad \mathbb{D}(f,g;x):=\mathbb{D}(f,g;1,x).   
\end{align*}

\section{Proof of Theorem \ref{thm_bddsums}}\label{sec: thin}

\subsection{The if direction}

We begin by proving the ``if'' direction of Theorem \ref{thm_bddsums}, which is considerably easier than the ``only if'' direction. We make use of the following lemma.

\begin{lemma}[Small perturbations to $f$ preserve closeness to mean value]\label{le_perturb} Let $f,\tilde{f}:\mathbb{N}\to \mathbb{U}$ be completely multiplicative functions such that $f(p)\neq \tilde{f}(p)$ for only finitely many primes and for those primes we have $|\tilde{f}(p)|<1$. Then for any $c\in \mathbb{C}$ we have
\begin{align}\label{comparison}
\Big|\sum_{n\leq x}\tilde{f}(n)-cP_{f,\tilde{f}}x \Big|  \ll_{f,\tilde{f}}\sup_{y\leq x}\Big|\sum_{n\leq y}f(n)-cy\Big|,   
\end{align}
where $P_{f,\tilde{f}}\neq 0$ is some constant. 
\end{lemma}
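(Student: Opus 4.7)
The plan is to realize $\tilde f$ as a Dirichlet convolution $\tilde f = f\ast h$ and then exploit the rapid decay of the difference function $h$. Let $S := \{p : f(p) \neq \tilde f(p)\}$, which is finite by hypothesis, with $|\tilde f(p)| < 1$ on $S$. Comparing Euler products, $\tilde f = f \ast h$, where $h$ is the multiplicative function supported on $S$-smooth integers with $h(p^k) = \tilde f(p)^{k-1}(\tilde f(p) - f(p))$ for $p \in S$, $k \geq 1$. In particular $|h(p^k)| \leq 2|\tilde f(p)|^{k-1}$ decays geometrically, so $\sum_b |h(b)| < \infty$ and the Dirichlet series
\[
H(s) := \sum_{b} \frac{h(b)}{b^s} = \prod_{p \in S} \frac{1 - f(p)/p^s}{1 - \tilde f(p)/p^s}
\]
extends holomorphically through $\Re s = 0$. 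Set $P_{f,\tilde f} := H(1) = \prod_{p \in S}(1 - f(p)/p)/(1 - \tilde f(p)/p)$; this is non-zero because $|f(p)|, |\tilde f(p)| \leq 1 < p$ makes every Euler factor non-zero.

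Put $M := \sup_{y \leq x} |\sum_{n \leq y} f(n) - cy|$ and write $\sum_{a \leq y} f(a) = cy + R(y)$ with $|R(y)| \leq M$. The convolution identity gives
\[
\sum_{n \leq x} \tilde f(n) = \sum_{b \leq x} h(b)\Big(\frac{cx}{b} + R(x/b)\Big) = cx \sum_{b \leq x} \frac{h(b)}{b} + \sum_{b \leq x} h(b) R(x/b).
\]
The second sum is bounded by $M \sum_b |h(b)| \ll_{f,\tilde f} M$, already the shape of the desired error. The first sum differs from $cP_{f,\tilde f}x$ by $-cx\sum_{b>x}h(b)/b$, so the proof reduces to bounding this tail by $O_{f,\tilde f}(1)$; we may assume $|c|$ is bounded, since if $M \geq x$ the lemma is trivial and otherwise $|c| \leq 1 + M/x \leq 2$.

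The tail estimate is the only substantive step. Any $S$-smooth integer $b > x$ has $\gg \log x$ prime factors (counted with multiplicity), so the geometric decay of $h$ gives $|h(b)| \ll_{f,\tilde f} x^{-\alpha}$ for some $\alpha = \alpha(f, \tilde f) > 0$ depending on $\max_{p \in S}|\tilde f(p)| < 1$. Combined with the sparsity bound $\sum_{b > x,\ S\text{-smooth}} 1/b \ll_{f,\tilde f} (\log x)^{|S|}/x$, this yields $|\sum_{b > x} h(b)/b| \ll_{f,\tilde f} x^{-1-\alpha + o(1)}$, and multiplication by $cx$ produces $O_{f,\tilde f}(1)$. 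Assembling the two bounds gives $|\sum_{n \leq x}\tilde f(n) - cP_{f,\tilde f}x| \ll_{f,\tilde f} M + 1$, and the additive $1$ is absorbed into the implicit constant (both sides are already controlled by $(f,\tilde f)$-dependent constants for $x$ bounded, so the inequality is trivially satisfied there). The only real obstacle is the tail estimate, but it is routine given the assumption $|\tilde f(p)| < 1$ on $S$.
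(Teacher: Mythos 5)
Your proof is correct and follows essentially the same route as the paper's: decompose $\tilde f = f * h$, identify $P_{f,\tilde f}$ as the Euler product $\prod_{p\in S}(1-f(p)/p)/(1-\tilde f(p)/p)$, and control the error via $\sum_b |h(b)| < \infty$. The one place you are more explicit than the paper is in truncating the convolution sum at $b \le x$ and estimating the tail $cx\sum_{b>x} h(b)/b$ (for which, incidentally, the cruder bound $\sum_{b>x}|h(b)|/b \le x^{-1}\sum_b |h(b)| \ll_{f,\tilde f} 1/x$ already suffices, without invoking smooth-number counting); the paper instead sums over all $S$-smooth $d$ and absorbs the $d>x$ contributions into the error term, which tacitly uses $|c| \ll F(x)$ — a point your reduction to $|c| \le 2$ handles cleanly.
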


\begin{proof} Let $S=\{p:\,\, f(p)\neq \tilde{f}(p)\}=\{p_1,\ldots, p_{|S|}\}$. Let us write $\tilde{f}=f*h$, where $h=\tilde{f}*(\mu f)$. For all $k\geq 1$, we have $h(p^k)=\tilde{f}(p)^k-\tilde{f}(p)^{k-1}f(p)$, so $|h(p^k)|\leq 2C|\tilde{f}(p)|^k$, where $C=1/|\tilde{f}(p)|$ if $\tilde{f}(p)\neq 0$ and $C=0$ if $\tilde{f}(p)=0$. If we denote by $F(x)$ the supremum on the right of \eqref{comparison}, we see that
\begin{align*}
\sum_{n\leq x}\tilde{f}(n)=\sum_{p\mid d\Longrightarrow p\in S}h(d)\sum_{m\leq x/d}f(m)=cx\sum_{p\mid d\Longrightarrow p\in S}\frac{h(d)}{d}+O(F(x)\sum_{p\mid d\Longrightarrow p\in S}|h(d)|).    
\end{align*}
The main term here is $cP_{f,\tilde{f}}x$, and by a short computation of the Euler factors we have
\begin{align*}
P_{f,\tilde{f}}=\prod_{p\in S}\Big(1+\frac{\tilde{f}(p)-f(p)}{p-\tilde{f}(p)}\Big)\neq 0.
\end{align*}
The error term in turn is
\begin{align*}
\ll F(x)\sum_{k_1,\ldots k_{|S|}\geq 0}|f(p_1)|^{k_1}\cdots |f(p_{|S|})|^{k_{|S|}}\ll_{f,\tilde{f}} F(x)
\end{align*}
by the geometric sum formula and the assumption $|\tilde{f}(p)|<1$ for $p\in S$. This completes the proof.
\end{proof}

\begin{proof}[If direction of Theorem \ref{thm_bddsums}] Let $f(p)=\chi(p)p^{it}$ for all $p\not \in S$ and $|f(p)|<1$ for $p\in S$, where $\chi$ is a Dirichlet character modulo $q$. Let $D$ be the product of all the primes in $S$. By Lemma \ref{le_perturb} (with $c=0$), it suffices to consider the case $f(n)=\chi(n)n^{it}1_{(n,D)=1}=\chi'(n)n^{it}$, where $\chi'(n):=\chi(n)1_{(n,D)=1}$ is a non-principal character modulo $Q:=qD$. 

When $t = 0$, the boundedness of the partial sums of $\chi'(n)n^{it}$ follows from the orthogonality of Dirichlet characters, so assume that $t \neq 0$. In this case,
\begin{align}\label{equ1}
\sum_{n \leq x} \chi'(n)n^{it} &= \asum_{a \pmod{Q}} \chi'(a)\sum_{m \leq x/Q} (mQ + a)^{it} + O_{Q}(1)\nonumber \\
&= Q^{it}\asum_{a \pmod{Q}} \chi'(a)\sum_{m \leq x/D'} m^{it}(1+a/(Qm))^{it} + O_Q(1),
\end{align}
where the asterisked sum $\asum_{a \pmod{Q}}$ denotes a sum over residue classes $a$ that are coprime to $Q$. By a Taylor approximation, for $m$ sufficiently large in terms of $|t|$ we have
\begin{align*}
(1+a/(Qm))^{it} = 1 + \frac{ita}{Qm} + O_t\Big(\frac{a^2}{Q^2m^2}\Big),
\end{align*}
and using this and  the orthogonality of characters \eqref{equ1} becomes
\begin{align*}
Q^{it}\asum_{a \pmod{Q}} \chi'(a) \sum_{m \leq x/Q} m^{it}\Big(1+\frac{ita}{Qm}\Big) + O_{Q,t}(1) = \frac{it}{Q^{1-it}} \asum_{a \pmod{Q}} a\chi'(a) \sum_{m \leq x/Q} \frac{1}{m^{1-it}} + O_{Q,t}(1).
\end{align*}
Since $Q = O(1)$, we easily find that the $m$ sum is $\zeta(1+1/\log x -it) + O_Q(1) = O_{Q,t}(1)$ whenever $t \neq 0$. Hence, $\chi'(n)n^{it}$ has bounded partial sums. \end{proof}

\subsection{Reduction to pretentious functions}

In the next two lemmas, we will reduce to the case where $f$ pretends to be $\chi(n)n^{it}$ for some character $\chi$ and $t\neq 0$.

\begin{lemma}\label{le_1bdd}
Let $f: \mb{N} \ra \mb{C}$ be a completely multiplicative function that satisfies $\sum_{n\leq x}f(n)=O(x^{o(1)})$. Then $|f(n)| \leq 1$ for all $n$.
\end{lemma}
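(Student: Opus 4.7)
The plan is to suppose for contradiction that some prime $p$ satisfies $|f(p)| > 1$, set $\alpha := \log|f(p)|/\log p > 0$, and fix $\varepsilon \in (0, \alpha)$; the hypothesis $\sum_{n\le x} f(n) = O(x^{o(1)})$ then yields $|\sum_{n \leq x} f(n)| \leq C x^\varepsilon$. First I would peel off the $p$-part of $n$ using complete multiplicativity: writing $n = p^a m$ with $(m, p) = 1$ gives
\[
\sum_{n \leq x} f(n) \;=\; \sum_{a \geq 0} f(p)^a\, G(x/p^a), \qquad \text{where} \qquad G(y) \;:=\; \sum_{\substack{m \leq y \\ (m, p) = 1}} f(m).
\]
Specializing to $x = c p^K$ with $c \in [1, p)$ and reindexing via $j = K - a$, one obtains $\sum_{n \leq c p^K} f(n) = f(p)^K \sum_{j = 0}^{K} f(p)^{-j}\, G(c p^j)$. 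Dividing by $|f(p)|^K = p^{K\alpha}$ and applying the hypothesis to the left-hand side shows the inner $j$-sum is $O(c^\varepsilon p^{K(\varepsilon - \alpha)})$, which tends to $0$ as $K \to \infty$. The tail bound $|f(p)^{-j} G(c p^j)| \ll c^\varepsilon p^{j(\varepsilon - \alpha)}$ moreover provides uniform absolute convergence in $c \in [1, p)$, so the limiting identity
\[
\Phi(c) \;:=\; \sum_{j \geq 0} f(p)^{-j}\, G(c p^j) \;=\; 0
\]
holds for every $c \in [1, p)$.

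Next I would exploit the right-continuity and jump structure of $\Phi$. The function $G$ is right-continuous with a jump of $f(m)$ at each positive integer $m$ with $(m, p) = 1$. For $c_0 = n/p^{j_0} \in (1, p)$ with $(n, p) = 1$, an easy divisibility check shows that $c_0 p^j$ is an integer coprime to $p$ precisely when $j = j_0$; hence, invoking the uniform convergence above to exchange the left-limit with the infinite sum defining $\Phi$, the jump of $\Phi$ at $c_0$ equals $f(p)^{-j_0} f(n)$. On the other hand, $\Phi \equiv 0$ on $[1, p)$ forces both $\Phi(c_0)$ and $\Phi(c_0^-)$ to vanish, so the jump is zero and $f(n) = 0$. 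Since $(j_0, n) \mapsto n/p^{j_0}$ sends $\{j_0 \geq 0,\, p^{j_0} < n < p^{j_0 + 1},\, (n, p) = 1\}$ onto the set of all integers $n \geq 2$ coprime to $p$, this yields $f(n) = 0$ for every such $n$. Consequently $f$ is supported on powers of $p$, so $\sum_{n \leq x} f(n) = \sum_{0 \leq k \leq \log_p x} f(p)^k$ has magnitude $\asymp x^\alpha$ (using $f(p) \neq 1$, which holds since $|f(p)| > 1$), contradicting $|\sum_{n \leq x} f(n)| \leq C x^\varepsilon$ with $\varepsilon < \alpha$.

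The main obstacle is the jump-and-limit bookkeeping in the second paragraph: one has to correctly identify the single $j$ at which $c \mapsto G(cp^j)$ has a jump at $c_0 = n/p^{j_0}$, and justify exchanging the left-limit $c \to c_0^-$ with the infinite sum defining $\Phi$. The first follows from $(n, p) = 1$, which prevents the values $n p^{j-j_0}$ from being integers coprime to $p$ unless $j=j_0$; the second is a direct consequence of the uniform absolute convergence established in the first paragraph.
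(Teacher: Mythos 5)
Your argument is correct, but it takes a dramatically more elaborate route than the paper's. The paper observes directly that for any integer $m$,
\[
f(m) \;=\; \sum_{n\le m} f(n) \;-\; \sum_{n\le m-1} f(n) \;=\; O(m^{o(1)}),
\]
and then specializes to $m = p_0^k$: this gives $|f(p_0)|^k = O(p_0^{k\,o(1)})$, which is immediately contradicted for $k$ large when $|f(p_0)|>1$. That is the entire proof. Your version instead builds the auxiliary function $\Phi(c)=\sum_{j\ge 0} f(p)^{-j}G(cp^j)$, establishes $\Phi\equiv 0$ on $[1,p)$ by a limiting argument, extracts $f(n)=0$ for all $n\ge 2$ coprime to $p$ from a jump analysis, and only then reaches a contradiction via the geometric sum over powers of $p$. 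The conclusion is correct and you do prove something slightly stronger along the way (namely, that $f$ would have to vanish off the powers of $p$), but none of that machinery is needed here: the telescoping identity alone suffices, since the quantity $f(p^k)$ you are ultimately comparing against $x^{o(1)}$ is itself a difference of two consecutive partial sums. One small unjustified step in your write-up: the tail bound $|f(p)^{-j}G(cp^j)| \ll c^{\varepsilon}p^{j(\varepsilon-\alpha)}$ requires $|G(y)| \ll y^{\varepsilon}$, which you never verify; it does follow easily from the identity $G(y)=\sum_{n\le y}f(n)-f(p)\sum_{n\le y/p}f(n)$, but that should be stated.
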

\begin{proof} Suppose on the contrary that $|f(p_0)|>1$ for some prime $p_0$. Fix $x \geq p_0$. Then, by the assumption on $f$, for $k=\lfloor (\log x)/(\log p_0)\rfloor$ we have
\begin{align*}
f(p_0^k)=\sum_{n\leq p_0^{k}}f(n)-\sum_{n\leq p_0^k-1}f(n)=O(x^{o(1)}),    
\end{align*}
but on the other hand we have $|f(p_0^k)|=|f(p_0)|^k\gg x^{c}$ for some $c>0$, a contradiction.
\end{proof}

\begin{lemma} \label{le_pret}
Suppose that $f: \mb{N} \ra \mb{U}$ satisfies $\sum_{n\leq x}f(n)=O(1)$, and that \eqref{eq_thin} holds. Then there is a primitive, non-principal Dirichlet character $\chi$ and a real number $t$ such that 
$$
\sum_p \frac{1-\Re(f(p)\bar{\chi}(p)p^{-it})}{p} < \infty.
$$
\end{lemma}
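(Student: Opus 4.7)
The plan is to prove Lemma \ref{le_pret} in two stages: first establish the equivalence between \eqref{eq_thin} and the form \eqref{equ11}, and then extract the pretentious structure via a second-moment argument in short intervals.

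For the equivalence, I would apply the Hal\'asz--Wirsing mean value theorem to the non-negative multiplicative function $|f|$. This yields
\begin{equation*}
\sum_{n\leq x}|f(n)| \sim x\prod_{p\leq x}\Big(1-\frac{1}{p}\Big)\Big(1+\sum_{k\geq 1}\frac{|f(p^k)|}{p^k}\Big),
\end{equation*}
and the Euler product is bounded below by a positive constant precisely when $\sum_p(1-|f(p)|)/p < \infty$. Thus \eqref{eq_thin} is equivalent to $\sum_{n\leq x}|f(n)|\gg x$, and since $|f|\leq 1$ this further gives $\sum_{n\leq x}|f(n)|^2 \geq c x$ for some $c>0$.

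For the pretentious conclusion, I argue by contradiction: assume that $\mathbb{D}(f,\chi(n)n^{it};\infty) = \infty$ for every primitive Dirichlet character $\chi$ and every $t \in \mathbb{R}$. (Pretentiousness to $n^{it}$ with $\chi$ principal is ruled out separately: together with \eqref{eq_thin}, such pretentiousness would force a Hal\'asz main term of size $\asymp x$ in the partial sums, contradicting their boundedness; this step uses that the local Euler factors of a completely multiplicative $\mathbb{U}$-valued $f$ cannot vanish.) Under this non-pretentiousness, Tao's two-point Elliott theorem \cite{tao} produces uniform decay
\begin{equation*}
\sum_{n\leq x}f(n+a)\overline{f(n+b)} = \epsilon(x)\, x,\qquad 1\leq a\neq b\leq H,
\end{equation*}
with $\epsilon(x)\to 0$ for any $H=H(x)\to\infty$ that grows slowly enough.

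The contradiction is then extracted by expanding the mean square of partial sums over intervals of length $H$:
\begin{align*}
\sum_{n\leq x}\Big|\sum_{n<m\leq n+H}f(m)\Big|^2
&= H\sum_{n\leq x}|f(n)|^2 \;+\; \sum_{\substack{1\leq a,b\leq H\\ a\neq b}}\sum_{n\leq x}f(n+a)\overline{f(n+b)}\\
&\gg cHx - H^2\epsilon(x)x.
\end{align*}
Choosing $H(x)$ to grow slowly enough that $H\epsilon(x)\to 0$ makes the right-hand side $\gg Hx$. On the other hand, $\sum_{n\leq x}f(n) = O(1)$ forces each inner sum $S(n+H)-S(n)$ to be $O(1)$, so the left-hand side is $O(x)$. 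Letting $H\to\infty$ gives the desired contradiction, so $f$ must be pretentious to some non-principal $\chi(n)n^{it}$. The main obstacle is securing the required uniform-in-shifts quantitative form of Tao's two-point theorem and calibrating $H(x)$ against its decay rate $\epsilon(x)$ so that the off-diagonal terms genuinely dominate neither the diagonal $H\sum|f|^2$ nor the trivial $O(x)$ bound on the left-hand side.
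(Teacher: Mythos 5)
Your architecture mirrors the paper's --- a second-moment expansion of short-interval sums, a lower bound on the diagonal coming from \eqref{eq_thin}, and control of the off-diagonal via Tao's two-point theorem --- but the off-diagonal input you invoke does not exist, and this is a genuine gap rather than a calibration issue. You claim that Tao's theorem gives the natural-average decay $\sum_{n\leq x}f(n+a)\overline{f(n+b)}=\epsilon(x)\,x$ under non-pretentiousness. In fact Tao's result \cite[Theorem 1.3]{tao} is a \emph{logarithmically averaged} statement: under the non-pretentiousness hypothesis it bounds $\sum_{n\leq x}\frac{f(n+a)\overline{f(n+b)}}{n}$ by $o(\log x)$. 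The unweighted form you write down is the natural-density two-point Elliott conjecture, which is open and strictly stronger; it cannot be deduced from the logarithmic version. Since your second moment $\sum_{n\leq x}\big|\sum_{n<m\leq n+H}f(m)\big|^2$ is also unweighted, the entire off-diagonal estimate collapses.

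The paper's fix is to carry logarithmic weights throughout. From $\sum_{n\leq y}f(n)=O(1)$ one obtains
\[
1\gg\frac{1}{\log x}\sum_{n\leq x}\frac{1}{n}\Big|\sum_{n\leq m\leq n+H}f(m)\Big|^2 ,
\]
and expanding, together with the lower bound $\sum_{n\leq x}\frac{|f(n)|^2}{n}\gg\log x$ (equivalent to \eqref{eq_thin}; the paper proves this via Tenenbaum's lower-bound theorem for logarithmic means, playing the role your Hal\'asz--Wirsing step plays), one deduces $\max_{1\leq|h|\leq H}\big|\sum_{n\leq x}\frac{f(n)\bar{f}(n+h)}{n}\big|\gg_H\log x$. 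This is exactly the quantity Tao's logarithmic theorem forbids from being large when $f$ is non-pretentious, so for a single sufficiently large fixed $H$ one extracts a character $\chi$ and a real $t$ (bounded in terms of $H$) with $\mathbb{D}(f,\chi(n)n^{it};x)\ll_H 1$, and Elliott's lemma then makes $\chi,t$ independent of $x$. Note that $H$ stays fixed: there is no need to send $H=H(x)\to\infty$ nor to calibrate it against a decay rate $\epsilon(x)$, which dissolves the obstacle you flag at the end. Your treatment of the principal-character case (Delange's theorem forcing a main term $\gg x$) is correct and agrees with the paper.
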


\begin{proof}
We first claim that \eqref{eq_thin} is equivalent to
\begin{align}\label{eq_thin2}
\sum_p \frac{1-|f(p)|^2}{p}<\infty.    
\end{align}
This follows from $1-|f(p)|\leq 1-|f(p)|^2\leq 3(1-|f(p)|)$. Then we claim that \eqref{eq_thin2} is equivalent to
\begin{align}\label{eq_logmean}
\sum_{n \leq x} \frac{|f(n)|^2}{n} \gg \log x.
\end{align}
Indeed, by e.g. \cite[Th\'{e}or\`{e}me 1.1]{tenLB} the left-hand side is $\asymp \prod_{p\leq x}(1+|f(p)|^2/p)$, which is $\asymp \log x$ if and only if \eqref{eq_thin2} holds. 

Now, given $H \geq 1$ and $x$ chosen sufficiently large in terms of $H$, we have
\begin{align}\label{equ10}
1 &\gg \frac{1}{\log x}\sum_{n \leq x} \frac{1}{n}\Big|\sum_{n \leq m \leq n + H} f(n)\Big|^2 \\
&= \frac{1}{\log x} \sum_{|h| \leq H} \sum_{\max\{1,1-h\} \leq m \leq \min\{x,x+h\}} f(m)\bar{f}(m+h) \sum_{\max\{m-H,m+h-H\} \leq n \leq \min\{m,m+h\}} \frac{1}{n}\nonumber\\ 
&\geq \frac{H}{\log x} \sum_{n \leq x} \frac{|f(n)|^2}{n} - \frac{1}{\log x} \sum_{1 \leq |h| \leq H} (H+1-|h|) \Big|\sum_{1 \leq n \leq x} \frac{f(n)\bar{f}(n+h)}{n}\Big| - O(1),\nonumber
\end{align}
and therefore we must have
$$
\max_{1 \leq |h| \leq H} \Big|\sum_{1 \leq n \leq x} \frac{f(n)\bar{f}(n+h)}{n}\Big| \geq \frac{1}{H}\sum_{n \leq x} \frac{|f(n)|^2}{n} - O(1) \gg_H \log x.
$$
By the two-point logarithmic Elliott conjecture \cite[Theorem 1.3]{tao}, there exists a number $A=O_H(1)$ such that $\inf_{\cond(\chi)\leq A}\inf_{|t|\leq Ax}\mathbb{D}(f,\chi(n)n^{it};x)\ll_H 1$. Since this holds for all $x\gg_H 1$, by a lemma of Elliott \cite[Lemma 17]{elliott2010} (see also \cite[Lemma 2.3]{klu_man}) we can find a primitive character $\chi\pmod q$ with $q=O_H(1)$ and a real $t = O_H(1)$, both independent of $x$, such that 
$$
\sum_p \frac{1-\text{Re}(f(p)\bar{\chi}(p)p^{-it})}{p} < \infty.
$$
It remains to check that $\chi$ is non-principal. If $\chi$ were principal then by Delange's theorem (Lemma \ref{le_hal}) we would have
\begin{align*}
 \Big|\sum_{n \leq x} f(n)\Big|&=(1+o(1))\frac{x}{|1+it|} \prod_{p \leq x} \Big(1-\frac{1}{p}\Big) \Big|1-\frac{f(p)p^{-it}}{p}\Big|^{-1}\\
 &\gg_t  x\exp\Big(-\sum_{p \leq x} \frac{1-\Re(f(p)p^{-it})}{p}\Big) \gg x,
\end{align*}
a contradiction with the boundedness of the partial sums when $x$ is sufficiently large. Thus $\chi$ must be non-principal.
\end{proof}

\subsection{Reduction to modified characters}

Our next task is to prove the following; this will be achieved using the  `rotation trick" described in the introduction. 

\begin{prop}\label{all_finite}
Let $f:\mathbb{N}\to\mathbb{U}$ be a completely multiplicative function such that we have $\mathbb{D}(f,\chi(n)n^{it};\infty)<\infty$ for some non-principal Dirichlet character $\chi\pmod q$ and some $t\in\mathbb{R}.$ Suppose that $\sum_{n\leq x}f(n)=O(1)$. Then we have $f(p)=\chi(p)p^{it}$ for all but finitely many primes $p$.
\end{prop}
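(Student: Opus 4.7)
The plan is to argue the contrapositive via the \emph{rotation trick}: assume toward contradiction that $f(p)\ne \chi(p)p^{it}$ at an infinite sequence of primes, and for any prescribed large constant $M$ construct two intervals $I=(Qm,Qm+H]$ and $I'=(Qm',Qm'+H]$ of equal length $H$ with
\begin{align*}
\Big|\sum_{n\in I}f(n)-\sum_{n\in I'}f(n)\Big|\ge M.
\end{align*}
Since each partial sum $\sum_{n\le y}f(n)$ is $O(1)$ by hypothesis, the left-hand side is also $O(1)$, and any fixed $M$ exceeding twice that absolute bound yields the contradiction. To set this up, fix a large integer $\ell=\ell(M)$ and pick distinct primes $p_1,\dots,p_\ell$, coprime to $q$, with $f(p_j)\ne \chi(p_j)p_j^{it}$. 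Writing $\lambda_j:=f(p_j)\bar\chi(p_j)p_j^{-it}\in \mathbb{U}\setminus\{1\}$, I select an exponent $k_j\ge 1$ so that $|1-\lambda_j^{k_j}|\ge\delta$ for a fixed positive $\delta$: when $|\lambda_j|<1$ any large $k_j$ works, and when $|\lambda_j|=1$ some $k_j$ moves $\lambda_j^{k_j}$ far from $1$ on $S^1$. Next I fix a large $H$, pick residues $r_1,\dots,r_\ell\in[1,H]$ coprime to $q$ and lying in a common residue class modulo $q$ (so that $\chi(r_1)=\cdots=\chi(r_\ell)$), and set
\begin{align*}
Q:=q\,(H!)^2\prod_{j=1}^\ell p_j^{k_j+1}.
\end{align*}

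The fundamental lemma of the sieve together with the Chinese remainder theorem then produces many pairs $(m,m')$ such that (i) no $p_j$ divides any element of $(Qm,Qm+H]$, and (ii) $p_j^{k_j}\pdiv(Qm'+r_j)$ for each $j$, while no other divisibility by the $p_i$'s occurs in $(Qm',Qm'+H]$. For $r\ne r_j$ the factorizations of $Qm+r$ and $Qm'+r$ agree at the primes $p_1,\dots,p_\ell$ (both avoid them), and for $r=r_j$ they differ only by a factor $p_j^{k_j}$. By complete multiplicativity and the relation $f(p)=\chi(p)p^{it}$ off $S:=\{p:f(p)\ne \chi(p)p^{it}\}$, the quotient $f(Qm'+r)/f(Qm+r)$ is determined by the contributions from primes in $S\setminus\{p_1,\dots,p_\ell\}$, and when those contributions cancel one obtains the clean identity
\begin{align*}
\sum_{n\in I'}f(n)-\sum_{n\in I}f(n)=\sum_{j=1}^\ell\bigl(\lambda_j^{k_j}-1\bigr)f(Qm+r_j).
\end{align*}
In the \emph{thin} case $\sum_{p\in S}1/p<\infty$, an additional sieve step removes all primes of $S$ above a threshold from both intervals, making this identity literally valid. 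A pigeonhole choice of the $k_j$'s and $r_j$'s aligns the arguments of $(\lambda_j^{k_j}-1)f(Qm+r_j)$ to a common sector, so the right-hand side has magnitude $\gg \ell\delta$, and choosing $\ell>M/\delta$ concludes the argument in that case.

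The main obstacle is the non-thin case $\sum_{p\in S}1/p=\infty$, in which ordinary sieving cannot remove the primes of $S$. Here I would invoke the paper's concentration inequality, Lemma~\ref{concentrat}, which should guarantee that for a positive proportion of $m\le X$ the normalized values $f(Qm+r)\bar\chi(r)(Qm+r)^{-it}$ simultaneously lie within $\eta$ of deterministic Euler-product constants $\widetilde P(r)$, with $|\widetilde P(r)|$ bounded below uniformly in $r\in[1,H]$ (the lower bound following from $\mathbb{D}(f,\chi(n)n^{it};\infty)<\infty$). Intersecting this concentration event with the two sieve conditions for $m$ and $m'$ still produces a positive-density set of pairs, so that the approximate termwise agreement $f(Qm'+r)-f(Qm+r)=O(\eta)$ holds for $r\ne r_j$ and one arrives at
\begin{align*}
\sum_{n\in I'}f(n)-\sum_{n\in I}f(n)=\sum_{j=1}^\ell(\lambda_j^{k_j}-1)\chi(r_j)(Qm+r_j)^{it}\widetilde P(r_j)+O(H\eta).
\end{align*}
Choosing $\eta$ much smaller than $\delta\min_j|\widetilde P(r_j)|/H$ and synchronizing the $\ell$ main-term arguments by a further pigeonhole on $m$ (using $(Qm+r_j)^{it}\approx(Qm)^{it}$ uniformly in $j$) yields a lower bound of size $\gg\ell\delta\min_j|\widetilde P(r_j)|$, which exceeds $M$ once $\ell$ is taken sufficiently large. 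The technical crux is to verify that Lemma~\ref{concentrat} can be applied simultaneously at all $H$ residues and that the joint sieve-plus-concentration density remains positive; this is where the pretentious hypothesis $\mathbb{D}(f,\chi(n)n^{it};\infty)<\infty$ is used substantively.
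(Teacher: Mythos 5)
Your overall strategy — the rotation trick, comparing two short intervals after CRT/sieve preparation, and invoking the concentration inequality when $S$ is not thin — is the same as the paper's. However, there is a genuine gap in the step that is supposed to produce the lower bound on the rotated sum.

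You assert that the Euler-product constants $\widetilde P(r)$ satisfy $|\widetilde P(r)|$ bounded below \emph{uniformly} in $r\in[1,H]$, claiming this follows from $\mathbb{D}(f,\chi(n)n^{it};\infty)<\infty$. That is false. Unwinding the concentration estimate, $\widetilde P(r)\approx f(r)\bar\chi(r)r^{-it}\exp(F(\cdot))$, so $|\widetilde P(r)|\asymp |f(r)|$ up to an $\exp(F)$ factor that is indeed bounded below; but $|f(r)|$ itself certainly need not be bounded away from $0$ for all $r\leq H$ (one may have $f(p)=0$ for some small prime $p$, or $|f(p)|<1$ on a set of primes whose reciprocal sum diverges, in which case $|f(r)|$ is small for a density-one set of $r$). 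Pretentiousness only gives you that $1-|f(p)|$ is small on average, not pointwise. Consequently, if several of your chosen $r_j$ happen to have $|\widetilde P(r_j)|$ tiny or zero, the sum $\sum_j(\lambda_j^{k_j}-1)\chi(r_j)(Qm+r_j)^{it}\widetilde P(r_j)$ need not have magnitude $\gg \ell\delta\min_j|\widetilde P(r_j)|$, and the contradiction evaporates. Your criterion for selecting $r_1,\dots,r_\ell$ (coprime to $q$ and in a common class mod $q$) does nothing to prevent this.

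The paper repairs exactly this point: it uses the pretentiousness hypothesis to deduce $\sum_p(1-|f(p)|)/p<\infty$, hence $\sum_{m\leq H}|f(m)|^2/m\gg\log H$, and from a dyadic/Chebyshev argument that there are $\gg H$ integers $m\leq H$ with $|f(m)|\geq c$ for some absolute $c>0$. It then pigeonholes among \emph{those} $m$ to choose the $r_j$'s (additionally forcing $\arg(f(r_j)r_j^{-it})$ into a narrow sector), and it is this selection that guarantees each term $(f\bar\psi(p_j^{\alpha_j})-1)f(r_j)r_j^{-it}$ has magnitude $\geq c/10$. Your proof needs this selection step; without it, the lower bound on the rotated sum does not follow. (Two smaller remarks: in the thin case your ``clean identity'' is only clean after factoring out the $n^{it}$ twist, since $f(Qm+r)$ and $f(Qm'+r)$ carry different $n^{it}$ values even when their $S$-parts agree — a minor point since the error is $o(1)$; and your alignment pigeonhole must control $\arg(f(r_j))$ and $\arg(1-\lambda_j^{k_j})$ simultaneously, which you gesture at but should make explicit.)
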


Before beginning the proof, we need a concentration inequality for multiplicative functions (closely related to the Tur\'an--Kubilius inequality) which we shall repeatedly use in the course of the proof.

Let
\[F(Q):=\sum_{\substack{p\le x\\p\nmid Q}}\frac{f(p)\bar{\chi(p)}p^{-it}-1}{p}.\]
\begin{lemma}[Concentration inequality for multiplicative functions]\label{concentrat}
Let $f:\mathbb{N}\to\mathbb{U}$ be a multiplicative function such that $\mathbb{D}(f,\chi(n)n^{it};\infty)<\infty$ for some Dirichlet character of conductor $q.$ Let $Q,N\geq 1$ be integers with $\prod_{p\leq N}p\mid Q$ and $q\mid Q.$ Then for any $1\leq a\leq Q$ with $(a,Q)=1$ we have 
\begin{align}\label{equ12}\sum_{n\le x}|f(Qn+a)-\chi(a)(Qn)^{it}\exp\Big(F(Q)\Big)|\ll x\Big(\mathbb{D}(1,f\bar{\chi}(n)n^{-it};N,x)+\frac{1}{N^{1/2}}\Big).
\end{align}
\end{lemma}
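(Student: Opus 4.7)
\emph{Plan.} Set $g(n):=f(n)\overline{\chi(n)}n^{-it}$, which is multiplicative, bounded by $1$, and satisfies $\mathbb{D}(g,1;\infty)<\infty$. The hypotheses $q\mid Q$ and $\prod_{p\leq N}p\mid Q$ combined with $(a,Q)=1$ imply $\chi(Qn+a)=\chi(a)$ and that every prime factor of $m:=Qn+a$ exceeds $N$. Writing $f(Qn+a)=\chi(a)(Qn+a)^{it}g(Qn+a)$ and Taylor-expanding $((Qn+a)/(Qn))^{it}=1+O_t(1/n)$, the error introduced by replacing $(Qn+a)^{it}$ with $(Qn)^{it}$ contributes $O_t(\log x)$, which is absorbed into $xN^{-1/2}$. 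It therefore suffices to prove
\[\sum_{n\leq x}|g(Qn+a)-e^{F(Q)}|\ll x(\mathbb{D}(1,g;N,x)+N^{-1/2}).\]

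The central tool is the Tur\'an--Kubilius inequality for arithmetic progressions, applied to the strongly additive function $\Omega(m):=\sum_{p\mid m,\,N<p\leq x}(g(p)-1)$, whose expected value on $\{Qn+a:n\leq x\}$ equals $F(Q)$. This yields
\[\sum_{n\leq x}|\Omega(Qn+a)-F(Q)|^2\ll x\sum_{\substack{N<p\leq x\\p\nmid Q}}\frac{|g(p)-1|^2}{p}\ll x\,\mathbb{D}(1,g;N,x)^2,\]
using $|g(p)-1|^2\leq 2(1-\Re g(p))$. The same estimate applied to the companion strongly additive function $R(m):=\sum_{p\mid m,\,N<p\leq x}|g(p)-1|^2$ yields $\sum_{n\leq x}R(Qn+a)\ll x\,\mathbb{D}(1,g;N,x)^2$.

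On squarefree $m$ whose prime factors lie in $(N,x]$, the Taylor bound $|g(p)-e^{g(p)-1}|\leq|g(p)-1|^2$ (valid for $|g(p)|\leq 1$) combined with the telescoping product estimate $|\prod a_p-\prod b_p|\leq \sum_p|a_p-b_p|$ (valid since $|g(p)|,|e^{g(p)-1}|\leq 1$) gives $|g(m)-e^{\Omega(m)}|\leq R(m)$. By the triangle inequality and the elementary bound $|e^a-e^b|\leq 2|a-b|\,\mathbf{1}_{|a-b|\leq 1}+2\cdot\mathbf{1}_{|a-b|>1}$ (valid here because $\Re F(Q),\Re\Omega(m)\leq 0$, so $|e^{F(Q)}|,|e^{\Omega(m)}|\leq 1$),
\[|g(m)-e^{F(Q)}|\leq R(m)+2|\Omega(m)-F(Q)|\,\mathbf{1}_{|\Omega-F|\leq 1}+2\,\mathbf{1}_{|\Omega-F|>1}.\]
Cauchy--Schwarz on the first indicator (converting the $L^2$ bound into an $L^1$ bound) and Chebyshev's inequality on the second yield a squarefree contribution of $\ll x(\mathbb{D}+\mathbb{D}^2)\ll x\,\mathbb{D}(1,g;N,x)$. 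The contribution from $m=Qn+a$ that is either non-squarefree (above $N$) or possesses a prime factor exceeding $x$ is handled by a standard inclusion--exclusion over primes $p>N$ (bounding $\sum_p(\lfloor x/p^2\rfloor+1)$ carefully) and is $\ll x/N+\sqrt{x}$, absorbed into $xN^{-1/2}$.

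The main obstacle is controlling the non-squarefree count uniformly in $Q$, which may be astronomically larger than $x$ (since $Q\geq\prod_{p\leq N}p\geq e^{N(1+o(1))}$): a naive Selberg-type bound $\sum_{N<p\leq\sqrt{Qx+a}}(\lfloor x/p^2\rfloor+1)$ produces a $\pi(\sqrt{Qx})$-sized correction, resolved by noticing that for $p>\sqrt{x}$ the $+1$ term is active only when the unique admissible residue $-a\overline{Q}\pmod{p^2}$ actually falls into $[1,x]$, yielding a count of $\ll\sqrt{x}$.
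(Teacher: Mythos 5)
Your high-level strategy — reduce to $g = f\bar{\chi}n^{-it}$, apply a Tur\'an--Kubilius inequality in arithmetic progressions, Taylor-expand $g(p)\approx e^{g(p)-1}$ and use the telescoping product inequality — is the same as the paper's, but you make one structural choice that the paper avoids and that introduces a genuine gap: you work with the \emph{strongly additive} function $\Omega(m)=\sum_{p\mid m,\,N<p\le x}(g(p)-1)$, i.e.\ only first prime powers and only primes up to $x$. The paper instead uses the full additive function $h(m)=\sum_{p^k\| m,\,p>N}h(p^k)$ over \emph{all} prime powers dividing $m$, with no upper cutoff on the primes, so that $\exp(h(Qn+a))$ captures the entire multiplicative structure of $Qn+a$ and Tur\'an--Kubilius over $m\le Qx+a$ in the progression absorbs the contribution of both prime powers and of primes exceeding $x$ (the latter contribute $\ll \sum_{p>x}|h(p)|^2/p\le 2\,\mathbb D(1,g;x,\infty)^2$, which is small). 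Your choice forces you to throw out every $m=Qn+a$ that is either non-squarefree above $N$ or has a prime factor exceeding $x$, and bound their total contribution trivially.

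That is where the argument breaks. You claim the exceptional count is $\ll x/N+\sqrt{x}$, but the ``prime factor $>x$'' case alone already exceeds this. Since $\prod_{p\le N}p\mid Q$, we have $\log Q\gg N$, and the proportion of $m\in[1,Qx]$ lying in the progression $a\pmod Q$ that have a prime factor $>x$ is $\asymp 1-\rho\bigl(\log(Qx)/\log x\bigr)\asymp \log Q/\log x$ for fixed $Q$ and large $x$. Thus the number of bad $n\le x$ from this source is $\asymp x\log Q/\log x\gg xN/\log x$, which for every fixed $Q,N$ eventually dwarfs $\sqrt{x}$ (and indeed is $\gg x/N$ unless $\log x\gg N^2$). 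Your sketch only discusses the $\sum_p(\lfloor x/p^2\rfloor+1)$ bound, which addresses square divisors, not large prime divisors; the ``$\ll\sqrt{x}$'' accounting for primes $p>\sqrt x$ also silently acquires a $\sqrt Q$ factor since the range of relevant $p$ extends to $\sqrt{Qx+a}$. Neither defect is cosmetic: for these excluded $m$ you pay $|g(m)-e^{F(Q)}|$ in full, and there is no mechanism in your argument to recover the loss. The fix is exactly what the paper does — include primes $p>x$ and all prime powers $p^k$ in the additive function, so that the discrepancy $|g(m)-e^{\Omega(m)}|$ is controlled pointwise by $\sum_{p^k\|m,\,p>N}|h(p^k)|^2$ with no exceptional set, and then let the Tur\'an--Kubilius variance bound (whose prime-power sum ranges over $p^k\le Qx+a$, with the $k\ge2$ part contributing $\ll 1/N$) do the rest.
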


\begin{proof} This is essentially the Tur\'an--Kubilius inequality adapted to multiplicative functions; see e.g. \cite[Proposition 2.3]{klu} or  \cite[Lemma 2.10]{Orbits}. For the sake of completeness, we sketch the proof. 

Let us write $f(n)=f'(n)\tilde{\chi}(n)n^{it}$ for some multiplicative $f'$ and for $\tilde{\chi}$ the completely multiplicative function given by $\tilde{\chi}(p)=\chi(p)$ if $p\nmid q$ and $\tilde{\chi}(p)=1$ if $p\mid q$. Since $f(Qn+a)=\chi(a)(Qn)^{it}f'(Qn+a) + O_t(1/n)$, it suffices to consider the function $f'$ (upon admitting an additional error term of size $O_t(\log x)$, which is negligible).  Therefore, we may assume that $\chi=1$, $t=0$ in the statement.

Let $h:\mathbb{N}\to \mathbb{C}$ be the additive function given on prime powers by $h(p^k)=f(p^k)-1$. Using $z=e^{z-1}+O(|z-1|^2)$ for $|z|\leq 1$, we have
\begin{align*}
f(Qn+a)=\prod_{\substack{p^k\mid \mid Qn+a\\p>N}}f(p^k)=  \prod_{\substack{p^k\mid \mid Qn+a\\p>N}}(\exp(h(p^k))+O(|h(p^k)|^2)).  
\end{align*}
Using the inequality $|\prod_{i\leq k}z_i-\prod_{i\leq k}w_i|\leq \sum_{i\leq k}|z_i-w_i|$ (which is proven by induction), the above becomes
\begin{align*}
\exp(h(Qn+a))+O\Big(\sum_{\substack{p^k\mid \mid Qn+a\\p>N}}|h(p^k)|^2\Big).     
\end{align*}
Hence, the left-hand side of \eqref{equ12} is 
\begin{align}\label{equ13}
\ll \sum_{n\leq x}|\exp(h(Qn+a))-\exp(F(Q))|+\sum_{n\leq x}\sum_{\substack{p^k\mid \mid Qn+a\\p>N}}|h(p^k)|^2.    
\end{align}
In the first sum in \eqref{equ13}, we may replace $F(Q)$ with $\mu_h:=\sum_{\substack{p^k\leq x \\ p \nmid Q}}h(p^k)/p^k\cdot (1-1/p)$, since $F(Q)=\mu_h+O(1/N)$. Now, applying first $|e^z-e^w|\ll |z-w|$ for $\textnormal{Re}(z),\textnormal{Re}(w)\leq 0$, then Cauchy--Schwarz and then the Tur\'an--Kubilius inequality \cite[Theorem III.3.1]{Ten}, the first sum in \eqref{equ13} becomes \begin{align}\label{equ14}
\ll x\Big(\sum_{p>N,k\geq 1}\frac{|h(p^k)|^2}{p^k}\Big)^{1/2}.   
\end{align}
The contribution of the summands with $k\geq 2$ is trivially $\ll 1/N$. For the other summands, we note that $|h(p)|^2=2(1-\textnormal{Re}(f(p)))$, so we obtain the claimed error term.

For the second sum in \eqref{equ13}, we swap the order of summation to obtain the bound $\ll x\sum_{p>N,k\geq 1}|h(p^k)|^2/p^k$, which again is an admissible contribution. \end{proof}

We are now ready to prove Proposition~\ref{all_finite}.

\begin{proof}[Proof of Proposition \ref{all_finite}]

Let $\varepsilon, H,w,x$ be quantities obeying the hierarchy
\begin{align}\label{equ6}
1\ll \varepsilon^{-1}\ll H\ll w\ll x,    
\end{align}
with each parameter large enough in terms of the ones to the left of it. Also let $W:=\prod_{p\leq w}p^{w}$. 

In the proof of Lemma \ref{le_pret}, we showed that \eqref{eq_thin} is equivalent to \eqref{eq_logmean}, and hence there exists a constant $c>0$ such that for $\gg H$ integers $m$ we have $|f(m)|\geq c$.

By the pigeonhole principle, we can find a subset $r_1,\ldots, r_k$ of $\{m\leq H:\,\, |f(m)|\geq c\}$ of size $k\gg H$ such that for some complex number $\alpha\in (-\pi,\pi]$ we have 
\begin{align}\label{eq_arg}
|\textnormal{arg}(f(r_j)r_j^{-it})-\alpha|\leq 1/100    
\end{align}
for all $j\leq k$. Define the sector
\begin{align*}
A:=\{z\in \mathbb{C}\setminus\{0\}:\,\, \textnormal{arg}(z)\in [\frac{2\pi}{3},\frac{4\pi}{3}]\pmod{2\pi}\}\cup \{0\}.    
\end{align*}

Now we suppose that the set $\mc{P} := \{p:\,\, f(p)\ne \chi(p)p^{it}\}$ is infinite. 

Let
$\psi(n):=\chi(n)n^{it}.$   
By the pigeonhole principle, we choose large prime powers $p_1^{\alpha_1},\dots, p_k^{\alpha_k}$ 
such that the following are simultaneously satisfied:
\begin{enumerate}[(i)]
\item $p_i\in \mathcal{P}$;\\
\item $p_{i+1}>2p_i>2w$ for all $i\leq k-1$;\\
\item $f\bar{\psi}(p_i^{\alpha_i})\in A$ for all $i\leq k.$\\
\end{enumerate}
Property (iii) can indeed be guaranteed, since if $z\neq 0,\, 1$ is any complex number, there exists $j$ such that $z^j\in A$ (if $z/|z|=e(a/b)$ for some $b\geq 2$, then we can easily find $j$ with this property, and if $z/|z|$ is not a root of unity, then the sequence $((z/|z|)^j)_{j\geq 1}$ is dense on the unit circle).

Next, we apply the Chinese remainder theorem to choose a residue class $n_0\pmod{\prod_{i\le k}p_i^{\alpha_i}}$ such that $p_i^{\alpha_i}\vert \frac{W}{r_i}n+1$ whenever $n\equiv n_0\pmod{\prod_{i\leq k}p_i^{\alpha_i}}$. For any $1 \leq j\leq H$ and $n=n_0+m\prod_{i\le k}p^{\alpha_i}$, we observe that 
\begin{align*}f(Wn+r_j)&=f(r_j)f(p_j^{\alpha_j})f\Big(\frac{W}{r_j}\prod_{\substack{1\leq i\leq k\\i\neq j}}p_i^{\alpha_i}m+\frac{\frac{W}{r_j}n_0+1}{p_j^{\alpha_j}}\Big)\\
f(Wn+j)&=f(j)f\Big(\frac{W}{j}\prod_{1\leq i\leq k}p_i^{\alpha_i}m+\frac{W}{j}n_0+1\Big).
\end{align*}
Note also that $((W/r_i)n_0+1)/p_i^{\alpha_i}\equiv p_i^{-\alpha_i}\pmod q.$ We now use Lemma~\ref{concentrat}, the union bound,  and the fact (following from (ii)) that 
\begin{align}\label{equ5}
F(p_1\cdots p_kW)=F(W)+o_{w\to \infty}(1)
\end{align}
to guarantee that a proportion $1-o_{w\to \infty}(1)$ of numbers $(1-\varepsilon)x\leq n\leq x$, $n\equiv n_0\pmod{\prod_{i\le k}p^{\alpha_i}}$ satisfy 
$$|f(Wn+r_j)-f(r_j)f\bar{\psi}(p_j^{\alpha_j})(Wx/r_j)^{it}\exp(F(W))|\ll \varepsilon\quad \textnormal{for all}\quad 1\leq j\leq k.$$
and
$$|f(Wn+j)-f(j)(Wx/j)^{it}\exp(F(W))|\ll \varepsilon\quad \textnormal{for all}\quad 1\leq j\leq H.$$
Thus, there exists $(1-\varepsilon)x\leq n\le x$ such that 
\begin{align}\label{rotate_sum2}\begin{split}
&|\sum_{j\le k}f(Wn+r_j)-(Wx)^{it}\exp(F(W))\sum_{j\le k}f(r_j)f\bar{\psi}(p_j^{\alpha_j})r_j^{-it}|\ll \varepsilon k,\\
&|\sum_{\substack{1\leq j\le H\\j\neq r_1,\ldots, r_k}}f(Wn+j)-(Wx)^{it}\exp(F(W))\sum_{\substack{1\leq j\le H\\j\neq r_1,\ldots, r_k}}f(j)j^{-it}|\ll \varepsilon H,\\
&|\sum_{1\leq j\le H}f(Wn+j)-(Wx)^{it}\exp(F(W))\sum_{1\leq j\leq H}f(j)j^{-it}|\ll \varepsilon H.
\end{split}
\end{align}
Applying the triangle inequality to combine the first two estimates of \eqref{rotate_sum2}, we see that
\begin{align*}
\sum_{j\leq H}f(Wn+j)=(Wx)^{it}\exp(F(W))\Big(\sum_{j\leq H}f(j)j^{-it}+\sum_{j\leq k}(f\bar{\psi}(p_j^{\alpha_j})-1)f(r_j)r_j^{-it}\Big)+O(\varepsilon H).    
\end{align*}
Comparing with the last estimate in \eqref{rotate_sum2}, we obtain 
\begin{align}\label{finalsum}
\Big|\sum_{j\leq k}(f\bar{\psi}(p_j^{\alpha_j})-1)f(r_j)r_j^{-it}\Big|\ll \varepsilon H.
\end{align}
However,  this is impossible since by (iii) and the assumption $|f(r_j)|\geq c$ for all $j$ we have 
$$
|(f\bar{\psi}(p_j^{\alpha_j})-1)f(r_j)r_j^{-it}|\geq c/10,
$$ and by \eqref{eq_arg} and (iii) we have
\begin{align*}
&\textnormal{arg}((f\bar{\psi}(p_j^{\alpha_j})-1)f(r_j)r_j^{-it})\\
&=\textnormal{arg}(f\bar{\psi}(p_j^{\alpha_j})-1)+\textnormal{arg}(f(r_j)r_j^{-it})\\
&\in [\alpha-\pi/3-1/100,\alpha+\pi/3+1/100]\pmod{2\pi}\},
\end{align*}
and so $f\bar{\psi}(p_j^{\alpha_j})-1)f(r_j)r_j^{-it}$ lies in a sector of angle $2\pi/3+1/50<\pi$.
\end{proof}

\subsection{The case of modified characters}

In the remaining case where $f$ is a modified character, we will in fact prove the following quantitative bound on the partial sums.
\begin{prop}\label{generalized_quant}
Let $f:\mathbb{N}\to\mathbb{U}$ be a completely multiplicative function, such that $f(p)=\chi(p)p^{it}$ for all primes $p\not \in S,$ for some finite set of primes $S.$ Suppose further, that there exists $r\in S$ such that $|f(r)|=1$ and $f(r)\ne \chi(r)r^{it}.$ Then
\[\sum_{n\le x} f(n)=\Omega(\log x).\]
\end{prop}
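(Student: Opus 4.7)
The plan is to argue by induction on $|S'(f)|$, where $S'(f) := \{p \in S : |f(p)| = 1,\ f(p) \ne \chi(p)p^{it}\}$, with base case $|S'(f)| = 1$. First I would apply Lemma~\ref{le_perturb} with $c = 0$: define $\tilde f$ to agree with $f$ at primes with $|f(p)| = 1$ (in particular $\tilde f(r) = f(r)$) and to vanish at primes $p \in S$ with $|f(p)| < 1$. The lemma gives $\sup_{y \le x}|T_{\tilde f}(y)| \asymp \sup_{y\le x}|T_f(y)|$ (up to the nonzero factor $P_{f,\tilde f}$), so it suffices to prove the claim for $\tilde f$, and we may assume $f$ takes values in $\{0\} \cup S^1$. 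The case of principal $\chi$ is immediate: a Delange-type computation yields $T_f(x) \gg x$. So we assume from now on that $\chi$ is non-principal.

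Splitting $n \le x$ according to $k := v_r(n)$ yields the recursion
\[ T_f(x) = \sum_{k=0}^{N} f(r)^k\, S(x/r^k), \qquad S(y) := \sum_{\substack{n \le y \\ (n,r) = 1}} f(n), \qquad N = \lfloor \log x/\log r\rfloor. \]
Observe that $S(y) = T_{\bar f}(y)$ for the modified character $\bar f$ with $\bar f(r) := 0$ and $\bar f(p) := f(p)$ elsewhere, and that $S'(\bar f) = S'(f) \setminus \{r\}$. In the \emph{inductive step} ($|S'(f)| \ge 2$), pick any $r' \in S'(f) \setminus \{r\}$; by the inductive hypothesis applied to $\bar f$ with $r'$ playing the role of $r$, we have $|T_{\bar f}(x_k)| \gg \log x_k$ along some sequence $x_k \to \infty$. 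The relation $T_{\bar f}(x_k) = T_f(x_k) - f(r) T_f(x_k/r)$ together with $|f(r)| = 1$ then forces $\max(|T_f(x_k)|,|T_f(x_k/r)|) \gg \log x_k$, yielding $T_f = \Omega(\log x)$.

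The \emph{base case} $S'(f) = \{r\}$ (so $f(p) = \chi(p)p^{it}$ for $p \notin S$ and $f(p) = 0$ for $p \in S_0 := S \setminus \{r\}$) calls for a Borwein--Choi--Coons style argument. Here $S(y) = \sum_{n\le y,\ (n, r S_0) = 1}\chi(n)n^{it}$ is bounded and, up to the slow oscillation from $n^{it}$, periodic modulo $Q := qr \prod_{p \in S_0} p$. I would construct $x$ by specifying its digits in base $r$ so that each truncation $\lfloor x/r^k \rfloor \pmod Q$ for $0 \le k \le N$ lies in a residue class for which $f(r)^k\, S(\lfloor x/r^k\rfloor)$ stays in a fixed half-plane and is bounded below in modulus. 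The resulting partial sum satisfies $|T_f(x)| \gg N \gg \log x$; the combinatorial core of this construction is the content of Lemmas~\ref{zero_sum} and~\ref{zero_sumfinal}.

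The \emph{main obstacle} is precisely this base-case alignment. One must treat two regimes separately depending on whether $f(r)$ is a root of unity—in which case the alignment depends on the base-$r$ digit sequence of $x$ modulo the order of $f(r)$—or has irrational argument, where a quantitative Kronecker--Weyl equidistribution is needed to find the correct digits. When $t \ne 0$, additional uniformity is required to control the mild oscillation introduced by $n^{it}$ across the exponentially varying scales $x/r^k$, which is what drives the second, more delicate application of the rotation trick at this stage.
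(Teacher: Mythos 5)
Your overall blueprint—perturb away the $|f(p)|<1$ primes, split by $v_r(n)$ to get $T_f(x)=\sum_k f(r)^k S(x/r^k)$, and reduce to a single exceptional prime before tackling the digit combinatorics—matches the paper in spirit, but there are three concrete issues with the details.

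First, the induction on $|S'(f)|$ is not wrong, but it is superfluous. Lemma~\ref{le_perturb} imposes $|\tilde f(p)|<1$ on the \emph{perturbed} function, not on $f$, so you are free to set $\tilde f(p)=0$ at every $p\in S'(f)\setminus\{r\}$ regardless of the fact that $|f(p)|=1$ there. The paper's Lemma~\ref{zero_except1} does exactly this and lands in the one-exceptional-prime case in a single application of Lemma~\ref{le_perturb}; your recursion $T_{\bar f}(x)=T_f(x)-f(r)T_f(x/r)$ is a valid alternative route to the same reduction, but it is extra machinery.

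Second, your description of the base case is not an accurate account of what is needed. You propose to \emph{construct} $x$ digit by digit so that the terms $f(r)^k S(\lfloor x/r^k\rfloor)$ align in a half-plane and are bounded below. This cannot always be done directly: if $\Sigma(mq)=0$ for every $m$ (where $\Sigma$ denotes the partial sum of $\chi_{r,z}$), then $\Sigma(x)=\sum_{\lfloor x/q\rfloor q<n\le x}\chi_{r,z}(n)$ is uniformly bounded by $q$, and no choice of base-$r$ digits will produce $|T_f(x)|\gg\log x$. The paper's argument is a genuine dichotomy: Lemma~\ref{zero_sum} shows that if the partial sums are not $\Omega(\log x)$ then $\Sigma(mq)=0$ for all $m$, and Lemma~\ref{zero_sumfinal} then uses the rotation trick on two carefully chosen length-$q$ blocks to deduce $z=\chi(r)$, contradicting the hypothesis $f(r)\ne\chi(r)r^{it}$. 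Your sketch covers only one horn of this dichotomy and therefore does not by itself establish the base case.

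Third, the $t\neq 0$ reduction is a real gap in your argument, not a technicality. You correctly flag the difficulty—$S(y)$ is no longer periodic mod $Q$ when $t\neq 0$—but offer no resolution. The paper's Lemma~\ref{zero_except1} resolves it by choosing $x=r^KQ\asymp H^3$ and exploiting the recursion to show $\sum_{x\le n\le x+H}f(n)n^{-it}=\sum_{n\le H}f(n)n^{-it}+O_Q(1)$, then Taylor-expanding $n^{-it}\approx x^{-it}$ on $[x,x+H]$ to pass back from $f(n)n^{-it}$ to $f(n)$. Without this (or an equivalent device), the $t\neq 0$ case is unproved, and your proposal does not constitute a complete proof of the proposition.
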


We perform the following simple albeit very useful reduction.
\begin{lemma}[Reducing to $t=0$ and to only one exceptional prime]\label{zero_except1}
In order to prove Proposition \ref{generalized_quant}, it suffices to do so in the case where $f(p)=\chi(p)$ for all but one prime $r$, with $|f(r)|=1$. 
\end{lemma}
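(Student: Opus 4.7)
The plan is to perform the reduction in two independent stages: first I would trim the exceptional set $S$ down to the singleton $\{r\}$, and then, in that reduced setting, I would remove the twist $n^{it}$ by passing to $g(n) := f(n) n^{-it}$.

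For the first stage, given $f$ as in Proposition \ref{generalized_quant} with finite $S \ni r$, I would define the completely multiplicative function $f_1$ by setting $f_1(p) := 0$ for $p \in S \setminus \{r\}$ and $f_1(p) := f(p)$ otherwise. Since $f$ and $f_1$ differ only on $S \setminus \{r\}$, where $|f_1(p)| = 0 < 1$, Lemma \ref{le_perturb} applied with $c = 0$ and with $f_1$ playing the role of $\tilde f$ gives
\[\Big|\sum_{n \leq x} f_1(n)\Big| \ll \sup_{y \leq x} \Big|\sum_{n \leq y} f(n)\Big|.\]
Setting $D := \prod_{p \in S \setminus \{r\}} p$ and introducing the Dirichlet character $\chi'(n) := \chi(n) \mathbf{1}_{(n,D) = 1}$ of modulus $\mathrm{lcm}(q, D)$ (which is non-principal since $\chi$ is), one checks that $f_1(p) = \chi'(p) p^{it}$ for every $p \neq r$, while $|f_1(r)| = 1$ and $f_1(r) \neq \chi'(r) r^{it}$. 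Thus $f_1$ itself satisfies the hypothesis of Proposition \ref{generalized_quant}, now with the character $\chi'$ and exceptional set $\{r\}$; granting the proposition in that reduced case yields $\sum_{n \leq x} f_1(n) = \Omega(\log x)$, which transfers back to $f$ via the displayed bound.

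For the second stage, I would take $f$ with single exceptional prime $r$ and set $g(n) := f(n) n^{-it}$, which is completely multiplicative with $g(p) = \chi(p)$ for $p \neq r$, $|g(r)| = 1$, and $g(r) \neq \chi(r)$, matching the final reduced form of the lemma. Decomposing $n = r^k m$ with $(m,r) = 1$ gives the $r$-adic expansion
\[\sum_{n \leq x} f(n) = \sum_{k \geq 0} f(r)^k T_t(x/r^k), \qquad T_t(y) := \sum_{\substack{m \leq y \\ (m,r) = 1}} \chi(m) m^{it},\]
together with the analogous identity for $g$ using $T_0$. The function $n \mapsto \chi(n) \mathbf{1}_{(n,r)=1} n^{it}$ has only one exceptional prime (namely $r$, where its value is $0 < 1$) relative to a non-principal modified character, so the ``if'' direction of Theorem \ref{thm_bddsums} gives $T_t(y) = O(1)$ uniformly in $y$. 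Consequently the base-$r$ recursive analysis used to establish the $t = 0$ case (see Lemmas \ref{zero_sum} and \ref{zero_sumfinal}) applies verbatim with $T_t$ in place of $T_0$, producing $\sum_{n \leq x} f(n) = \Omega(\log x)$ from the special-case conclusion for $g$.

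The main obstacle will be the second stage: a direct formal implication from $\sum g(n) = \Omega(\log x)$ to $\sum f(n) = \Omega(\log x)$ via Abel summation on $f(n) = g(n) n^{it}$ only yields the tautological identity $F(x) = F(x)$, so no lower bound can be transferred by a pointwise estimate. The resolution will be to observe that the eventual lower-bound argument for the special case proceeds through quantities (namely the powers $f(r)^k$ and the uniformly bounded sums $T_t(x/r^k)$) whose behaviour is unaffected by the value of $t$, so the same argument works unchanged in the general case.
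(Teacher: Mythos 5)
Your first stage, trimming the exceptional set to $\{r\}$ by zeroing out the other primes and invoking Lemma~\ref{le_perturb}, is exactly the paper's argument and is fine. The second stage has a genuine gap. You correctly diagnose that a pointwise Abel-summation transfer from $\sum_{n\le x} g(n)=\Omega(\log x)$ to $\sum_{n\le x} f(n)=\Omega(\log x)$ is vacuous, but your proposed resolution does not repair it. The claim that the ``base-$r$ recursive analysis \ldots applies verbatim with $T_t$ in place of $T_0$'' is false: the argument in Lemma~\ref{zero_sum} hinges entirely on $S(X)=T_0(X)$ being a function of $\lfloor X\rfloor \bmod rq$. That periodicity is what makes the iterate identity $\Sigma(r^Kx+y)=z^K\Sigma(x)+\Sigma(y)$ exact and lets one build up constructive interference in \eqref{baseexpansion}. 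The function $T_t(y)=\sum_{m\le y,(m,r)=1}\chi(m)m^{it}$ is indeed uniformly bounded, but it is emphatically not periodic for $t\ne 0$ (the factor $m^{it}$ has no period), so there is no analogue of the iterate identity and the whole recursive construction collapses. Mere boundedness of $T_t$ is not enough; two bounded step functions with very different phase behaviour can yield completely different expansions $\sum_k z^k T(x/r^k)$, so the lower bound for $g$ cannot be ``read off'' for $f$.

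What the paper actually does is more subtle. After reducing to one prime, it applies the assumed $t=0$ case to $f(n)n^{-it}$ to get $|\sum_{n\le H} f(n)n^{-it}|\gg\log H$ for infinitely many $H$, and then transfers this to $f$ by a short-interval argument: choosing $x=r^KQ\asymp H^3$, the exact periodicity of $S=T_0$ forces $\sum_{x\le n\le x+H}f(n)n^{-it}=\sum_{n\le H}f(n)n^{-it}+O_Q(1)$, while on the interval $[x,x+H]$ the twist $n^{-it}=x^{-it}(1+O_t(H/x))$ is essentially constant and can be factored out. This yields $|\sum_{x\le n\le x+H}f(n)|\gg\log H\asymp\log x$, which gives the $\Omega(\log x)$ bound for the partial sums of $f$. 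The crucial point you are missing is this localization to a window where $n^{-it}$ is nearly constant \emph{and} where the periodic structure of the $t=0$ object aligns the window with the initial segment $[1,H]$. Without that, the twist cannot be removed.
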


\begin{proof}
Suppose $f(p) = \chi(p)p^{it}$ for all but a finite number of primes $p$. Let $Q$ be the product of all primes $p \neq r$ such that $f(p)\neq \chi(p)p^{it}$. By Lemma \ref{le_perturb}, we may replace $f$ by another completely multiplicative function $\tilde{f}$ such that  $\tilde{f}(p) = 0$ for all $p|Q$; for convenience, we continue to denote this new function by $f$.  Replacing $\chi$ by $\chi 1_{(\cdot,Q) = 1}$ (which we will denote by $\chi$ as well for convenience), it suffices to prove the case $f(p)=\chi(p)p^{it}$ for all $p\neq r$ and $|f(r)|=1$, $f(r)\neq \chi(r)r^{it}$, where $\chi \pmod Q$ is a non-principal (though not necessarily primitive) character. 

The case assumed in Lemma \ref{zero_except1} implies that
\begin{align}\label{equ3}
\Big|\sum_{n\leq H}f(n)n^{-it}\Big|\geq c_f \log H    
\end{align}
for infinitely many integers $H\geq 1$. Choose an integer $K$ such that $r^KQ\asymp H^3$ and let $x=r^KQ$. We now have
\begin{align}\label{equ2}
\sum_{x\leq n\leq x+H}f(n)n^{-it}=\sum_{0\leq k\leq K+Q}(f(r)r^{-it})^k\Big(S\Big(\left\lfloor \frac{x+H}{r^k}\right\rfloor\Big)-S\Big(\left\lfloor \frac{x}{r^k}\right\rfloor\Big)\Big),    
\end{align}
where $S(x):=\sum_{\substack{n\leq x \\ (n,r) = 1}}\chi(n)$. Since $S(0)=0$ and $S(x)$ depends only on  $\lfloor x\rfloor\pmod rQ$ and $\lfloor x/r^k \rfloor\equiv 0\pmod rQ$ for $0\leq k\leq K-1$ by our choice of $x$, \eqref{equ2} becomes
\begin{align*}
\sum_{0\leq k\leq K+Q}(f(r)r^{-it})^kS\Big(\left\lfloor \frac{H}{r^k}\right\rfloor\Big)+O_Q(1)=\sum_{n\leq H}f(n)n^{-it}+O_Q(1).    
\end{align*}
Combining this with the Taylor approximation $(x+h)^{-it}=x^{-it}+O_t(H/x)$ for $|h|\leq H$, we have 
\begin{align*}
 x^{-it}\sum_{x\leq n\leq x+H}f(n)=\sum_{n\leq H}f(n)n^{-it}+O(H^2/x)+O(1).   
\end{align*}
Recalling \eqref{equ3} and $H\asymp x^{1/3}$, the claim follows.
\end{proof}

Define the modified character $\chi_{r,z}$, where $r$ is a prime and $z\in S^1$, $z\neq \chi(r)$, by defining on the primes
\begin{align}\label{equ8}
\chi_{r,z}(p)=\begin{cases}\chi(p),\quad p\neq r\\z,\quad p=r.\end{cases}
\end{align}
In view of Lemma \ref{zero_except1}, the proof of Theorem \ref{thm_bddsums} will be complete once we show that
\begin{align}\label{equ4}
\sum_{n\leq x}\chi_{r,z}(n)=\Omega(\log x)    
\end{align}
for any $r$, any $z\in S^1\setminus \{\chi(r)\}$, and any non-principal $\chi \pmod q$. Here we may assume that $\chi(r)\neq 0$, since otherwise $\chi(n)=\chi'(n)1_{(n,r)=1}$, where $\chi'$ is a non-principal character modulo $q'=q/r^{v_r(q)}$ and $\chi'_{r,z}=\chi_{r,z}$.

The proof of \eqref{equ4} will be achieved in the next two lemmas.

\begin{lemma}[Obtaining a zero sum condition]\label{zero_sum}
Let the notation be as above, and suppose that \eqref{equ4} fails. Then for all $m\geq 1$ we have
\[\sum_{n\le mq}\chi_{r,z}(n)=0.\]
\end{lemma}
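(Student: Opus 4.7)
The plan is to prove the contrapositive: suppose $A := S(m_0 q) \neq 0$ for some $m_0 \geq 1$, where $S(x) := \sum_{n \leq x}\chi_{r,z}(n)$, and construct a sequence $x_k \to \infty$ for which $|S(x_k)| \gg \log x_k$. This would contradict the assumed failure of \eqref{equ4}.

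The crucial input is the translation identity
\[
S(r^N m_0 q + y) \;=\; z^N A + S(y), \qquad 0 \leq y < r^N.
\]
To derive it, I would observe that for $1 \leq n' < r^N$ one has $v_r(n') < N$, so setting $n := r^N m_0 q + n'$ gives $v_r(n) = v_r(n')$; moreover, since $(r,q) = 1$, the summand $r^{N - v_r(n')} m_0 q$ appearing in $n/r^{v_r(n)}$ is a multiple of $q$, so $\chi(n/r^{v_r(n)}) = \chi(n'/r^{v_r(n')})$ and hence $\chi_{r,z}(r^N m_0 q + n') = \chi_{r,z}(n')$. Summing over $1 \leq n' \leq y$, together with the separate fact that $S(r^N m_0 q) = z^N A$ (obtained by iterating the elementary recursion $S(rx) = zS(x) + T(rx)$, where $T(y) := \sum_{n \leq y,\, r \nmid n}\chi(n)$ is a non-principal character sum modulo $rq$ and therefore vanishes at every multiple of $rq$, in particular at $r^k m_0 q$ for $k \geq 1$), yields the identity.

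I would then iterate. Put $c := \lceil \log_r(2m_0 q) \rceil + 1$, and for a decreasing sequence $N_1 > N_2 > \cdots > N_k$ of positive integers with $N_j - N_{j+1} \geq c$, define $x_k := \sum_{j=1}^k r^{N_j} m_0 q$. The gap condition ensures that at each stage the ``tail'' $\sum_{i>j} r^{N_i} m_0 q$ is at most $2 r^{N_{j+1}} m_0 q < r^{N_j}$, so the identity telescopes to
\[
S(x_k) = A \sum_{j=1}^{k} z^{N_j}, \qquad \log x_k = N_1 \log r + O(1).
\]
It remains to pick the $N_j$ so that $\bigl|\sum_{j} z^{N_j}\bigr| \gg k \asymp N_1$. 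The set $T := \{N \in c\mathbb{Z}_{>0} : |z^N - 1| \leq 1/2\}$ has positive density in $c\mathbb{Z}_{>0}$: if $z$ has finite order $M$ then $T$ contains an arithmetic progression with common difference a multiple of both $c$ and $M$, while if $z$ has infinite order then Weyl equidistribution applied to the irrational rotation $z^c$ gives the claim. Drawing the $N_j$ from $T$ in decreasing order automatically enforces gaps $\geq c$ and yields $\Re\sum_j z^{N_j} \geq k/2$, hence $|S(x_k)| \gg |A|\cdot k \gg \log x_k$, which is the desired $\Omega(\log x)$ bound.

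The main obstacle is setting up the translation identity cleanly; its validity rests delicately on both $(r,q) = 1$ and the bound $v_r(n') < N$, which together force the $r$-valuation and the residue modulo $q$ of the ``$r$-free part'' of $r^N m_0 q + n'$ to coincide with those of $n'$. Once the identity is in hand, the iteration step and the final exponential-sum step are quite routine, with the unified density argument in $c\mathbb{Z}_{>0}$ sidestepping any case split on whether $z$ is a root of unity.
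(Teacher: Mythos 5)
Your argument is correct and follows the same overall strategy as the paper's proof: establish a base-$r$ translation identity for the partial sums of $\chi_{r,z}$, iterate it to get $S(x_k)=A\sum_j z^{N_j}$ along a carefully chosen sequence, and then choose the exponents so that $\gg\log x_k$ of the $z^{N_j}$ cluster near $1$. Where you diverge is in how the translation identity is established. The paper first writes $\Sigma(X)=\sum_{k\geq 0}z^k\,\big(\sum_{n\leq \lfloor X/r^k\rfloor,\,(n,r)=1}\chi(n)\big)$ and then manipulates the floors $\lfloor (r^Kx+y)/r^k\rfloor$ separately for $k<K$ and $k\geq K$, a derivation that needs both $x\equiv 0$ and $y\equiv 0\pmod q$. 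You instead prove the pointwise statement $\chi_{r,z}(r^N m_0 q+n')=\chi_{r,z}(n')$ for $1\leq n'<r^N$ by matching the $r$-valuation and the residue mod $q$ of the $r$-free part, and separately obtain $S(r^N m_0 q)=z^N A$ from the recursion $S(rx)=zS(x)+T(rx)$ with $T$ a nonprincipal character sum mod $rq$ that vanishes on multiples of $rq$. This is a bit more elementary, dispenses with the divisibility condition on $y$, and avoids the floor-function bookkeeping. The subsequent telescoping (with your $c$-spacing enforcing the needed "no-carry" condition) and the positive-density choice of exponents from $T=\{N\in c\mathbb{Z}_{>0}:|z^N-1|\leq 1/2\}$ mirror the paper's selection of $m$ with $|z^{mK}-1|\leq 1/100$, just packaged so that both the root-of-unity and irrational-rotation cases are handled uniformly. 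No gaps.
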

\begin{proof}
Let $$\Sigma(x):=\sum_{n\le x}\chi_{r,z}(n)\quad \textnormal{and}\quad S(x):=\sum_{\substack{n\le x\\ (n,r)=1}}\chi(n).$$ 
Suppose that the claim is not true. Then there exists $A=m_0q$ such that $\sum_{n\le A}\chi_{r,z}(n)\neq 0.$ 

Fix $K \in \mb{N}$. Let $x,y$ be any positive integers with $x\equiv y\equiv 0\pmod q$ and $y<r^K$. We claim that
\begin{align}\label{iterate}
\Sigma(r^Kx+y)=z^K\Sigma(x)+\Sigma(y). \end{align}
To see this, observe that for any $X\geq 1$ we have
$$\Sigma(X)=\sum_{k\geq 0}z^kS\Big(\left\lfloor \frac{X}{r^k}\right\rfloor\Big),$$
so
$$\Sigma(r^Kx+y)=\sum_{k\geq 0}z^kS\Big(\left\lfloor \frac{r^Kx+y}{r^k}\right\rfloor\Big).$$
Note that for $k<K$ we have
$$\left\lfloor \frac{r^Kx+y}{r^k}\right\rfloor=r^{K-k}x+\left\lfloor \frac{y}{r^k}\right\rfloor\equiv  \left\lfloor \frac{y}{r^k}\right\rfloor \pmod{rq},$$
whereas for $k\geq K$ we have
$$\left\lfloor \frac{r^Kx+y}{r^k}\right\rfloor=\left\lfloor \frac{x}{r^{k-K}}\right\rfloor,$$
since if the floor was one larger, by the assumption $r^{K}>y$ we would have
$$\Big\{\frac{x}{r^{k-K}}\Big\}>1-\frac{y}{r^k}>1-\frac{1}{r^{k-K}},$$
which is not possible. This gives \eqref{iterate}.

In what follows, let $K=O(1)$ be such that $r^{K}>A$. Iterating \eqref{iterate}, for any $0<m_1<m_2<\cdots<m_J$ and $J\geq 1$ we see that
\begin{align}\label{baseexpansion}
\Sigma(r^{m_JK}A+\cdots +r^{m_2K}A+r^{m_1K}A+A)=\Sigma(A)(1+z^{Km_1}+z^{Km_2}+\cdots +z^{Km_J})
\end{align}
Let $c=1/(10K r)$, and choose $\{m_j\}_{j \leq J}$ to consist of those $m\leq c\log x$ that satisfy $|z^{mK}-1|\leq 1/100$; the number $J$ of such $m$ is $\gg_z \log x$ (if $z$ has irrational argument, this follows from the equidistribution of $\{z^{mK}\}_m$, while if $z$ has rational argument this is immediate). Since $r^{m_JK}A+\cdots +r^{m_1K}+A<x$, see wee that there exists $n\leq x$ such that $|\Sigma(n)|\gg \log x$. This gives the desired contradiction.
\end{proof}

We then finish the proof with the following lemma that is based on the rotation trick (in fact, a simpler version than before).
\begin{lemma}[Finishing the proof]\label{zero_sumfinal}
Let the notation be as above. Then \eqref{equ4} holds for any $z\in S^1\setminus \{\chi(r)\}$. 
\end{lemma}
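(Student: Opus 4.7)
The plan is to argue by contradiction: assume that \eqref{equ4} fails, so Lemma~\ref{zero_sum} gives $\sum_{n\le mq}\chi_{r,z}(n)=0$ for every $m\ge 1$. First I will rewrite these vanishing sums in a convenient form. Since $\chi_{r,z}$ is completely multiplicative and agrees with $\chi$ off the prime $r$, a direct check shows that $\chi_{r,z}(n)=w^{v_r(n)}\chi(n)$, where $w:=z/\chi(r)\in S^1\setminus\{1\}$ (recall that $\chi(r)\neq 0$ because $r\nmid q$, and $w\neq 1$ because $z\neq\chi(r)$). Taking consecutive differences of the vanishing partial sums and using the $q$-periodicity of $\chi$ then transforms the assumption into the family of identities
\begin{equation*}
T_m:=\sum_{a=1}^q w^{v_r(mq+a)}\chi(a)=0 \qquad \text{for every } m\ge 0.
\end{equation*}

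The core of the argument is a small rotation trick that plays these identities off against each other. For each large integer $K$, the Chinese remainder theorem (applicable since $(r,q)=1$) lets me choose $m_K\in\mathbb{N}$ with $v_r(m_Kq+1)=K$ exactly. I then want to verify that for every $a\in\{2,\dots,q\}$, the valuation $v_r(m_Kq+a)$ stabilizes to a value that is independent of $K$ and of the particular choice of $m_K$. If $a\not\equiv 1\pmod r$, this is immediate, since $m_Kq+a\equiv a-1\not\equiv 0\pmod r$ forces $v_r(m_Kq+a)=0$. If $a=1+r\ell\equiv 1\pmod r$, then writing $m_Kq+1=r^KN$ with $(N,r)=1$ and applying the ultrametric inequality to $m_Kq+a=r^KN+r\ell$ gives $v_r(m_Kq+a)=1+v_r(\ell)$ whenever $K>1+v_r(\ell)$; the bound $\ell\le(q-1)/r$ ensures that the uniform threshold $K>\log_r q$ suffices for all such $a$ simultaneously.

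With this stabilization in hand, the relation $T_{m_K}=0$ reduces, for every sufficiently large $K$, to $w^K+C=0$, where
\begin{equation*}
C:=\sum_{a=2}^q w^{v_r(m_Kq+a)}\chi(a)
\end{equation*}
depends only on $q,r,\chi,w$ and not on $K$. Comparing this identity at $K$ and at $K+1$ yields $(w-1)C=0$; since $w\neq 1$ I obtain $C=0$ and therefore $w^K=0$, which is absurd as $|w|=1$. This contradiction establishes \eqref{equ4} and hence the lemma.

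The step I expect to require the most care is the stabilization of the valuations $v_r(m_Kq+a)$ for $a\equiv 1\pmod r$: I must rule out any coincidental cancellation in $r^KN+r\ell$ that could inflate the valuation past $1+v_r(\ell)$. Fortunately the naive bound $v_r(\ell)\le \log_r(q/r)$ produces an explicit, finite threshold $K_0=K_0(q,r)$ above which the two competing summands have distinct $r$-adic valuations, so the ultrametric inequality delivers equality automatically and the remaining book-keeping is routine.
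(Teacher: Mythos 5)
Your proof is correct, and it reaches the same conclusion via a genuinely different construction than the paper's. The paper runs a rotation trick with an auxiliary prime $P\equiv r\pmod q$ and two carefully built residues $r^m s_1$ and $r^{m-1}Ps_2$: it compares the two length-$q$ blocks containing these, shows that $v_r(n+qL_m)=v_r(n)$ for every $n$ in the first block except $n=r^m s_1$, so all terms cancel except one, and the surviving identity $z\chi(s_1)=\chi(P)\chi(s_2)$ forces $z=\chi(r)$. Your argument instead reformulates $\chi_{r,z}(n)=w^{v_r(n)}\chi(n)$ with $w=z/\chi(r)\neq 1$, works directly with the block sums $T_m=\sum_{a=1}^q w^{v_r(mq+a)}\chi(a)$ (which must all vanish by Lemma~\ref{zero_sum}), chooses $m_K$ so that only the $a=1$ term carries a $K$-dependent valuation, and shows via a uniform stabilization threshold that the remaining contribution $C$ is independent of $K$; then $w^K+C=0$ for all large $K$ gives $(w-1)C=0$, hence $C=0$ and $w^K=0$, which is absurd since $|w|=1$. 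Both proofs exploit the same phenomenon — that only a single term in a block of length $q$ changes under the chosen perturbation — but your version dispenses with the auxiliary prime $P$ and the residues $s_1,s_2$, replacing them with a clean parametrization by the valuation $K$ and a transparent ultrametric stabilization estimate; the paper's version is more in keeping with the ``rotation trick'' machinery used in the rest of the paper but carries somewhat more bookkeeping. The stabilization step you flag as potentially delicate is in fact fine: for $a\equiv 1\pmod r$, $a=1+r\ell$ with $1\le\ell\le(q-1)/r$, the ultrametric inequality applied to $r^KN+r\ell$ yields $v_r(m_Kq+a)=1+v_r(\ell)$ as soon as $K>1+v_r(\ell)$, and $v_r(\ell)\le\log_r((q-1)/r)$ gives the uniform threshold you need.
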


\begin{proof} Assume \eqref{equ4} fails. By Lemma \ref{zero_sum}, we may assume that $\sum_{n\le mq}\chi_{r,z}(n)=0$ for all $m\geq 1$. Also, as mentioned above, we may assume that $\chi(r)\neq 0$, so $(r,q)=1$.

We employ the rotation trick. Choose a large integer $m\ge 10 \log q/\log r$ and some large prime $P\ge 10q$ with $P\equiv r\pmod q.$ We now select two integers $s_1$, $s_2$ such that
\begin{itemize}
    \item $r^ms_1\equiv 1 \pmod q$,
    
    \item $r^{m-1}Ps_2\equiv 1 \pmod q$,
    
    \item $s_1\equiv s_2\equiv 1\pmod{r}$.
\end{itemize}
Let $k_m$ and $\ell_m$ be the unique integers such that $qk_m< r^ms_1\leq  q(k_m+1)$ and $q\ell_{m}< r^{m-1}Ps_2\leq  q(\ell_m+1)$; by choosing $s_1,s_2$ appropriately we can ensure that $k_m<\ell_m$. By Lemma \ref{zero_sum}, we have
\begin{align}\label{zerodifference}
\sum_{qk_m< n\leq q(k_m+1)} \chi_{r,z}(n)-\sum_{q\ell_{m}< n\leq q(\ell_{m}+1)} \chi_{r,z}(n)=0.
\end{align}
Let $L_m = \ell_m - k_m$. Then we can also write the left-hand side of \eqref{zerodifference} as
\begin{align}\label{zerodifference2}
\sum_{qk_m< n\leq  q(k_m+1)} z^{v_r(n)}\chi(n/r^{v_r(n)})-\sum_{qk_m< n\leq q(k_m+1)} z^{v_r(n+qL_m)}\chi((n+qL_m)/r^{v_r(n+qL_m)}).    
\end{align}
We have $qL_m=r^{m-1}Ps_2-r^ms_1\equiv 0\pmod{r^{m-1}}$ by construction, so $v_r(n+qL_m)=v_r(n)\leq \log q/\log r$ for all $n\in (qk_m,q(k_m+1)]$, $n\neq r^ms_1$. Hence, in \eqref{zerodifference2} all the terms cancel out apart from the $n=r^ms_1$ terms, so we are left with
\begin{align*}
z^{v_r(r^{m}s_1)}\chi(r^ms_1/r^{v_r(r^ms_1)})-
z^{v_r(r^{m-1}Ps_2)}\chi(r^{m-1}Ps_2/r^{v_r(r^{m-1}Ps_2)})=0.    
\end{align*}
Since $z\in S^1$, this implies that
\begin{align*}
z\chi(s_1)=\chi(P)\chi(s_2),      
\end{align*}
and since $s_1\equiv s_2\pmod q$,  $(s_i,q)=1$ and $P\equiv r\pmod q$, we must have $z=\chi(r)$, which is the desired contradiction.
\end{proof}

The proof of Theorem \ref{thm_bddsums} is now complete.

\begin{proof}[Proof of Corollary \ref{cor_glaz} b)]
Let $f:\mathbb{N}\to S^1\cup\{0\}$ be as in the corollary. Then, by Theorem~\ref{thm_bddsums}, there is a primitive character $\chi$ modulo $q$ and a $t \in \mb{R}$ such that for all but a finite set $S'$ of primes $p$ we have $f(p) = \chi(p)p^{it}$. For those primes $p \in S'$ we have $|f(p)| < 1$, which implies that $f(p) = 0$. Thus, $f$ is of the form $\chi'(n)n^{it}$, where $\chi'$ is induced by $\chi$.
\end{proof}

\section{Proof of Theorem \ref{theo_ruzsa}}

\subsection{The if direction.} For the proof of Theorem \ref{theo_ruzsa}, we again start with the much easier ``if'' direction. 

\begin{proof}[If direction of Theorem \ref{theo_ruzsa}] By Lemma \ref{le_perturb}, it suffices to show the closeness to mean value in the case $f(n)=1_{(n,Q)=1}$. For this function, we of course have
\begin{align*}
\sum_{n\leq x}1_{(n,Q)=1}=\frac{\varphi(Q)}{Q}x+O(1),    
\end{align*}
so the claim follows.
\end{proof}

\begin{comment}

This part is simple. Suppose $f(p)=1$ for all primes except $p_1,\ldots, p_k$. Then for some complex numbers $z_i$ with $|z_i|<1$ we may write
\begin{align*}
f(n)=z_1^{v_{p_1}(n)}\cdots z_k^{v_{p_k}(n)}.   
\end{align*}

By the sieve of Eratosthenes, we have an exact formula for the summatory function as
\begin{align*}
\sum_{n\leq x}f(n)&=\sum_{m_1,\ldots m_k\geq 0}z_1^{m_1}\cdots z_k^{m_k}\sum_{\substack{m\leq x\\p_i^{m_i}\mid \mid m\,\, \forall i\leq k}}1 = \sum_{m_1,\ldots,m_k \geq 0} z_1^{m_1}\cdots z_k^{m_k} \sum_{r \leq x/(p_1^{m_1}\cdots p_k^{m_k})} 1_{(r,p_1\cdots p_k) = 1}\\
&=\sum_{m_1,\ldots m_k\geq 0}z_1^{m_1}\cdots z_k^{m_k}\sum_{d\mid p_1\cdots p_k}\mu(d)\left\lfloor \frac{x}{p_1^{m_1}\cdots p_k^{m_k}d}\right\rfloor.
\end{align*}
Since
\begin{align*}
\sum_{m_1,\ldots, m_k\geq 0}|z_1|^{m_1}\cdots |z_k|^{m_k}    
\end{align*}
converges, the sum over $d$ above is up to $O(1)$ error equal to
\begin{align*}
\frac{x}{p_1^{m_1}\cdots p_k^{m_k}}\sum_{d\mid p_1\cdots p_k}\frac{\mu(d)}{d}&=\frac{x}{p_1^{m_1}\cdots p_k^{m_k}}\prod_{p\mid p_1\cdots p_k}\Big(1-\frac{1}{p}\Big)\\
&:=c'\frac{x}{p_1^{m_1}\cdots p_k^{m_k}}
\end{align*}
for some $c'\neq 0$. Thus we conclude that
\begin{align*}
\sum_{n\leq x}f(n)=c'x\sum_{m_1,\ldots m_k\geq 0}z_1^{m_1}\cdots z_k^{m_k}p_1^{-m_1}\cdots p_k^{-m_k}+O(1)=cx+O(1),    
\end{align*}
where, by the geometric sum formula,
\begin{align*}
c:=c'\prod_{1\leq i\leq k}\frac{1}{1-z_i/p_i}\neq 0.  
\end{align*}
The if part has now been proven.

\end{comment}

\subsection{The only if direction.}

We introduce the following definition. 

\begin{def1} We say that a multiplicative function $f:\mathbb{N}\to \mathbb{C}$ has \emph{property R} if it satisfies \eqref{eq1} for some $c\neq 0$.
\end{def1}

We first state a few lemmas. The first lemma we need states the well-known theorems of Hal\'asz and Delange on mean values of multiplicative functions.

\begin{lemma}[Hal\'asz and Delange theorems]\label{le_hal}
Let $f:\mathbb{N}\to\mathbb{U}$ be a multiplicative function. If the sum
\begin{align*}
\sum_{p}\frac{1-\Re(f(p)p^{-it})}{p}    
\end{align*}
converges for some $t\in \mathbb{R}$, then
\begin{align*}
\frac{1}{x}\sum_{n\leq x}f(n)=\frac{x^{it}}{1+it}\prod_{p\leq x}\Big(1-\frac{1}{p}\Big)\Big(1+\sum_{k\ge 1}\frac{f(p^k)p^{-ikt}}{p^k}\Big)+o(1).    
\end{align*}
Otherwise, we have
\begin{align*}
\frac{1}{x}\sum_{n\leq x}f(n)=o(1).    
\end{align*}
\end{lemma}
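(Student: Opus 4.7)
The plan is to derive this classical fact by separating the two cases and invoking the Hal\'asz--Wirsing--Delange circle of ideas.

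For the second case (where $\sum_p (1-\Re(f(p)p^{-it}))/p$ diverges for every real $t$), this is Hal\'asz's theorem in its standard form, so I would argue via the Hal\'asz--Montgomery inequality applied to the Dirichlet series $F(s)=\sum_n f(n)n^{-s}$. The key bound is
\[ \Big|\tfrac{1}{x}\sum_{n\le x}f(n)\Big|\ll \frac{\log\log x}{\log x}+\sup_{|t|\le \log x}\frac{|F(1+1/\log x+it)|}{\log x}. \]
Coupled with the Euler-product estimate $|F(\sigma+it)|\ll \log x\cdot \exp(-c\mathbb{D}(f,n^{it};x)^2)$ valid near the $1$-line, the divergence of $\mathbb{D}(f,n^{it};\infty)$ for every $t$ forces the supremum to be $o(\log x)$ uniformly; this is the delicate point, and the standard way to handle it is a covering / second-moment argument over the range $|t|\le \log x$ that exploits the triangle inequality $\mathbb{D}(f,n^{it_1};x)+\mathbb{D}(f,n^{it_2};x)\ge \mathbb{D}(1,n^{i(t_1-t_2)};x)$.

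For the first case, set $g(n):=f(n)n^{-it}$, which is a multiplicative function in $\mathbb{U}$ satisfying $\sum_p(1-\Re g(p))/p<\infty$. I would apply the Wirsing--Delange theorem to $g$, which provides the asymptotic
\[ \sum_{n\le u}g(n)=u\,P(u)+o(u),\qquad P(u):=\prod_{p\le u}\Big(1-\tfrac{1}{p}\Big)\Big(1+\sum_{k\ge 1}\tfrac{g(p^k)}{p^k}\Big), \]
uniformly for $u\le x$, and note that since $g(p^k)=f(p^k)p^{-ikt}$, the Euler product $P(x)$ coincides with the one in the statement.

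To transfer the asymptotic from $g$ to $f$, I would use partial summation:
\[ \sum_{n\le x}f(n)=\sum_{n\le x}g(n)\,n^{it}=x^{it}\sum_{n\le x}g(n)-it\int_{1}^{x}u^{it-1}\Big(\sum_{n\le u}g(n)\Big)\,du. \]
Substituting the Wirsing--Delange asymptotic for the inner sum and using that $P(u)$ varies slowly enough that $\int_1^x P(u)u^{it}\,du=x^{1+it}P(x)/(1+it)+o(x)$ (the variation of $P$ being controlled by the convergent series $\sum_p(1-\Re g(p))/p$), the main term collapses to $x^{1+it}P(x)(1-it/(1+it))=x^{1+it}P(x)/(1+it)$, yielding the claimed formula after dividing by $x$.

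The main obstacle is the second case: proving the vanishing mean with full uniformity in $t$. The first case is a routine partial-summation reduction to Wirsing--Delange, whose own proof (via convolution identity $g=1*h$ with $\sum_p|h(p)|/p<\infty$, and elementary Dirichlet hyperbola) is classical. Since the lemma is stated purely for citation purposes in the paper, I would simply refer to the standard references (e.g.\ Granville--Soundararajan or Tenenbaum) for both parts rather than reproduce the full argument.
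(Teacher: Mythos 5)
Your proposal is correct and matches the paper's approach: the paper offers no argument at all and simply cites \cite[Theorem III.4.5]{Ten}, and you conclude with the same recommendation to defer to the standard references. Your sketch of the two cases (Hal\'asz's theorem for the divergent case, Wirsing--Delange plus Abel summation with $g(n)=f(n)n^{-it}$ and the slow variation of the Euler product $P(u)$ for the convergent case) is sound and consistent with how the cited result is actually proved.
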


\begin{proof}
See \cite[Theorem III.4.5]{Ten}.
\end{proof}

\begin{lemma}\label{le0} Suppose that the completely multiplicative function $f:\mathbb{N}\to \mathbb{C}$ has property R. Then we have $|f(n)|\leq 1$ for all $n$.
\end{lemma}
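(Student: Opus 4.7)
The plan is to mimic the very short argument used in Lemma \ref{le_1bdd}, taking advantage of the fact that the hypothesis here (property R) is much stronger than in that lemma. Assume for contradiction that there exists a prime $p_0$ with $|f(p_0)| > 1$. By complete multiplicativity, $|f(p_0^k)| = |f(p_0)|^k$, which grows exponentially in $k$.

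On the other hand, I would extract $f(p_0^k)$ as a telescoping difference of partial sums:
\[
f(p_0^k) \;=\; \sum_{n \leq p_0^k} f(n) \;-\; \sum_{n \leq p_0^k - 1} f(n).
\]
Applying the defining property R to both partial sums, the main terms cancel up to $|c|$ and the error terms contribute $O(1)$, so that $|f(p_0^k)| = O(1)$ uniformly in $k$. This contradicts the exponential lower bound for $k$ sufficiently large, so $|f(p)| \leq 1$ for every prime $p$. Complete multiplicativity then gives $|f(n)| \leq 1$ for every $n$.

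There is no real obstacle here; the proof is essentially one line, and its only purpose is to record a clean hypothesis ($f$ takes values in $\mathbb{U}$) for the subsequent pretentious analysis in the ``only if'' direction of Theorem \ref{theo_ruzsa}.
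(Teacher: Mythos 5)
Your proof is correct and is exactly what the paper intends: it cites the argument of Lemma~\ref{le_1bdd} (extracting $f(p_0^k)$ as a telescoping difference of partial sums and comparing growth rates), which is precisely the route you have spelled out, using property R to get the even stronger $O(1)$ bound on $f(p_0^k)$.
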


\begin{proof} This is proved analogously to Lemma~\ref{le_1bdd}.\end{proof}

\begin{lemma}[Reduction to pretentious case]\label{le2} Suppose that the completely multiplicative function $f:\mathbb{N}\to \mathbb{C}$ has property R. Then $\mathbb{D}(f,1:\infty)<\infty$.
\end{lemma}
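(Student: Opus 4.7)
The plan is to deduce $\mathbb{D}(f,1;\infty)<\infty$ directly from the Hal\'asz--Delange theorem (Lemma \ref{le_hal}), using the fact that the mean value of $f$ converges to a nonzero limit.

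First I would note that by Lemma \ref{le0} we have $|f(n)|\leq 1$ for all $n$, so $f:\mathbb{N}\to\mathbb{U}$ and Lemma \ref{le_hal} is applicable. The hypothesis $\sum_{n\leq x}f(n)=cx+O(1)$ with $c\neq 0$ gives $\frac{1}{x}\sum_{n\leq x}f(n)\to c\neq 0$, which rules out the ``otherwise'' branch of Lemma \ref{le_hal}. Consequently there exists some $t^*\in\mathbb{R}$ for which
$$
\sum_{p}\frac{1-\Re(f(p)p^{-it^*})}{p}<\infty,
$$
i.e.\ $\mathbb{D}(f,n^{it^*};\infty)<\infty$.

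Next I would apply the first branch of Lemma \ref{le_hal} for this specific $t^*$, obtaining
$$
\frac{1}{x}\sum_{n\leq x}f(n)=\frac{x^{it^*}}{1+it^*}P(x)+o(1),
$$
where $P(x):=\prod_{p\leq x}(1-1/p)\bigl(1+\sum_{k\geq 1}f(p^k)p^{-ikt^*}/p^k\bigr)$. The convergence of the series $\mathbb{D}(f,n^{it^*};\infty)^2$ ensures that $P(x)$ tends to a finite limit $P_\infty\in\mathbb{C}$ as $x\to\infty$.

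The main thing to check, and the only delicate step, is ruling out $t^*\neq 0$. If $P_\infty=0$ then the displayed formula yields $\frac{1}{x}\sum_{n\leq x}f(n)=o(1)$, contradicting $c\neq 0$. If instead $P_\infty\neq 0$, then combining the formula with $\frac{1}{x}\sum_{n\leq x}f(n)=c+O(1/x)$ forces $x^{it^*}\to c(1+it^*)/P_\infty$ as $x\to\infty$; but $x^{it^*}=e^{it^*\log x}$ oscillates for any $t^*\neq 0$, a contradiction. Hence $t^*=0$, and therefore $\mathbb{D}(f,1;\infty)<\infty$, completing the proof.
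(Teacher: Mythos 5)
Your proposal is correct and follows essentially the same approach as the paper: invoke Lemma \ref{le0} to get $|f(n)|\leq 1$, apply the Hal\'asz--Delange theorem (Lemma \ref{le_hal}) to obtain a $t$ with $\mathbb{D}(f,n^{it};\infty)<\infty$, and then rule out $t\neq 0$ via the oscillation of $x^{it}$. The only (minor, and correct) difference is that you explicitly treat the case $P_\infty=0$ separately, which the paper folds into the same "evident contradiction."
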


\begin{proof}
Since we have $|\sum_{n\leq x}f(n)|\gg x$ and also $|f(n)|\leq 1$ by Lemma \ref{le0}, from Lemma \ref{le_hal} we deduce the existence of some real number $t$ such that
\begin{align}\label{eq5}
\sum_{p}\frac{1-\Re(f(p)p^{-it})}{p}<\infty.    
\end{align}
We suppose for the sake of contradiction that $t\neq 0$. Lemma \ref{le_hal} then gives the asymptotic
\begin{align*}
\frac{1}{x}\sum_{n\leq x}f(n)=\frac{x^{it}}{1+it}\prod_{p}\Big(1-\frac{1}{p}\Big)\Big(1+\frac{f(p)}{p^{1+it}}+\frac{f(p)^2}{p^{2+2it}}+\cdots\Big)+o(1):=c'x^{it}+o(1)   
\end{align*}
for some constant $c'$, since the product over $p$ above converges by \eqref{eq5}. But since $f$ satisfies property R with some constant $c$, this implies
\begin{align*}
c=c'x^{it}+o(1),    
\end{align*}
as $x\to \infty$, which is an evident contradiction since $m\mapsto m^{it}$, $m\geq x_0$ is dense on the unit circle. Thus \eqref{eq5} holds with $t = 0$.
\end{proof}

\begin{proof}[Proof of Theorem~\ref{theo_ruzsa}]
Applying Lemma~\ref{le0} and Lemma~\ref{le2}, we may assume that $\mathbb{D}(f,1;\infty)<\infty.$ Let $\varepsilon, H,w$ be parameters obeying \eqref{equ6}.

Let $W=\prod_{p\leq w}p^{w}$ for some $w\ge 1.$ Assuming that $f(p)\ne 1$ for infinitely many primes $p,$ we can repeat the argument as in the proof of Proposition \ref{all_finite} verbatim 
with $\chi$ being trivial character and $t=0$ to end up with a contradiction.

Consequently, we let $p_1,\dots, p_k$ be the primes for which $f(p_i)\neq 1.$ If $|f(p_i)|<1$ for all $i\le k$ we are done by the if direction of the proof. Otherwise, choose some $r$ such that $f(r)\neq 1$ and $|f(r)|=1$, and let $z=f(r)\neq 1$. Let $Q$ be the product of all $p_i$, $p_i\neq r$. By Lemma \ref{le_perturb}, it suffices to prove that if $\chi^{*}\pmod Q$ is the principal character, then the function $\chi_{r,z}^{*}$ (defined as in \eqref{equ8}) does not satisfy
\begin{align}\label{equ9}
\sum_{n\leq x}\chi_{r,z}^{*}(n)=c'x+O(1)    
\end{align}
for any $c'\neq 0$. If we define $Q'=rQ$ and $R(y):=\sum_{n\leq y}1_{(n,Q')=1}-(\varphi(Q')/Q')y$, then the left-hand side of \eqref{equ9} can be written as
\begin{align*}
\sum_{k\leq 2\log x}z^k\Big(\frac{\varphi(Q')}{Q'}\cdot \frac{x}{r^k}+R\Big(\frac{x}{r^k}\Big)\Big) =\frac{\varphi(Q')}{Q'}\cdot \frac{1}{1-z/r}x+ \sum_{k\geq 0}z^kR\Big(\frac{x}{r^k}\Big)+O(1).  
\end{align*}
The last sum over $k$ is clearly $O(\log x)$, so we must have $c'=(\varphi(Q')/Q')(1-z/r)^{-1}$ and
\begin{align*}
 \sum_{k\geq 0}z^kR\Big(\frac{x}{r^k}\Big)=O(1).   
\end{align*}
However, following the proof of Lemma \ref{zero_sum} verbatim, this can happen only if
\begin{align*}
R(mQ)=0\quad \textnormal{for all}\quad m\geq 1.    
\end{align*}
But then following the proof of Lemma \ref{zero_sumfinal} verbatim, we conclude that $z=\chi^{*}(r)=1$, which is a contradiction, and this finishes the proof.\end{proof}

\begin{proof}[Proof of Corollary \ref{cor_glaz} a)]
Let $f: \mb{N} \ra \mb{C}$ be a completely multiplicative function which satisfies \eqref{eq1} and  $|f(n)|\in \{0,1\}$ for all $n$. By Theorem \ref{theo_ruzsa} there is a finite set $S$ of primes such that for all $p \notin S$ we have $f(p) = 1$, and otherwise $|f(p)|<1$, so actually $f(p)=0$. But this means that $f(n)=1_{(n,Q)=1}$ for some $Q\geq 1$.\end{proof}

\section{Proof of Theorem \ref{thm_sfedp}}\label{others}

In this section, we apply the rotation trick to prove the square-free discrepancy conjecture (Theorem \ref{thm_sfedp}). Unlike in the previous sections, we need to apply the trick to functions that are not completely multiplicative.  The proof is carried out in two steps, the first of which is the following proposition that reduces us to the case of modified characters.

\begin{prop}\label{prop_redaym}
Let $g: \mb{N} \ra \{-1,+1\}$ be a multiplicative function, and let $f = \mu^2 g$. Assume that $f$ pretends to be a real quadratic character $\chi$ modulo $q$, and let $S := \{p : f(p) \neq \chi(p)\}$. If 
$$
\limsup_{x \ra \infty} \Big|\sum_{n \leq x} f(n) \Big| < \infty,
$$
then $|S| < \infty$.
\end{prop}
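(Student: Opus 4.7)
The plan is to adapt the rotation trick from the proof of Proposition \ref{all_finite} to the squarefree-supported function $f = \mu^2 g$. Since $\chi$ is real and $\chi(p), f(p) \in \{-1, 0, +1\}$, the pretentiousness hypothesis forces $f(p)\bar{\chi}(p) = -1$ at every $p \in S$ with $p \nmid q$, contributing $2/p$ to $\mathbb{D}(f,\chi;\infty)^2$; hence $\sum_{p \in S} 1/p < \infty$, i.e., $S$ is thin. I argue by contradiction: suppose $|S| = \infty$, and aim to show that $f$ has unbounded partial sums.

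Following the setup of Proposition \ref{all_finite}, fix parameters $\varepsilon^{-1} \ll H \ll w$ with $w > q$, let $W = \prod_{p \leq w} p^w$, and by pigeonhole choose squarefree $r_1, \ldots, r_k \in [1,H]$ with $k \gg H$ and $g(r_j) = +1$ for all $j$ (so $f(r_j) = +1$). Using $|S| = \infty$, pick distinct primes $p_1 < \cdots < p_k \in S$ with $p_1 > w$, and by the Chinese remainder theorem select $n_0 \pmod{\prod_i p_i}$ such that $p_j \mid (W/r_j) n + 1$ for every $j$ whenever $n \equiv n_0$. For each squarefree $r \in [1,H] \setminus \{r_1,\ldots,r_k\}$, the coprime factorization $Wn + r = r \cdot ((W/r)n + 1)$ (valid because $r \mid W/r$ for such $r$) reduces matters to the second factor, which runs over an AP whose residue is coprime to the modulus (by CRT, since $r \neq r_i$ forces $(W/r)n_0 + 1 \not\equiv 0 \pmod{p_i}$). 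Lemma \ref{concentrat} then yields
\[
f(Wn+r) = f(r)\exp(F(W)) + O(\varepsilon)
\]
for all but an $O(\varepsilon)$-proportion of $n \equiv n_0$; for non-squarefree $r$, both $f(r)$ and $f(Wn+r)$ vanish. For $r = r_j$, a further coprime factorization $(W/r_j)n + 1 = p_j K_j$ (valid for all but an $O(1/p_j)$-density set of $n$) followed by Lemma \ref{concentrat} applied to $f(K_j)$ gives $f(K_j) = \chi(p_j^{-1}) \exp(F(W)) + O(\varepsilon) = \chi(p_j)\exp(F(W)) + O(\varepsilon)$, from which $f(Wn + r_j) = f(r_j) f(p_j) \chi(p_j) \exp(F(W)) + O(\varepsilon) = -f(r_j)\exp(F(W)) + O(\varepsilon)$, using $f(p_j)\chi(p_j) = -1$ for $p_j \in S$ and $\chi$ real.

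Picking a single $n \equiv n_0$ at which all the above pointwise estimates hold (which exists by a union bound over the $H$ values of $r$, provided $w$ is taken large enough) and summing over $r \leq H$ gives
\[
\sum_{r \leq H} f(Wn + r) = \exp(F(W))\Big(\sum_{r \leq H} f(r) - 2 \sum_{j \leq k} f(r_j)\Big) + O(\varepsilon H).
\]
Both the left-hand side and $\sum_{r \leq H} f(r)$ are $O(1)$ by the bounded-partial-sums hypothesis, while $\exp(F(W)) \gg 1$ (since $\sum_{p > w, p \in S} 1/p = o_{w \to \infty}(1)$). Rearranging yields $k = \sum_{j \leq k} f(r_j) = O(1) + O(\varepsilon H)$, which contradicts $k \gg H$ for $\varepsilon$ sufficiently small and $H$ sufficiently large. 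The principal technical obstacle will be converting the $L^1$ bound in Lemma \ref{concentrat} into the required pointwise estimates uniformly in $r \in [1,H]$ at a single $n$; this is handled via a union bound together with the thinness of $S$ (so that $\mathbb{D}(1, f\bar\chi; w, x)$ can be made $\ll (\varepsilon/H)^2$ by taking $w$ large), and by noting that the proportion of $n$ for which $Wn+r$ fails to be squarefree (from some large prime-square divisor) is $O(1/w)$, negligible compared to $\varepsilon$ for $w$ large.
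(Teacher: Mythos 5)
Your proposal is correct in its essentials, and it takes a genuinely different route from the paper's. The paper proves Proposition \ref{prop_redaym} by a direct combinatorial/sieve argument with \emph{no appeal to the concentration inequality}: it builds two long arithmetic progressions $\mc{M}_{\mbf{r}'}(x)$ and $\mc{N}_{\mbf{r}'}(x)$ of moduli $m$ via the Chinese remainder theorem and a crude union bound, arranged so that on the intervals $I_H(m)=[(H!)^2m+1,(H!)^2m+H]$ the value $f((H!)^2m+r)$ vanishes for $r\notin\mbf{r}'$ (by forcing a small squared prime factor) and can be computed \emph{exactly} as $g(r_j')$ for $m\in\mc{M}$ and as $g\chi(p_j)\,g(r_j')=-g(r_j')$ for $m'\in\mc{N}$; subtracting the two interval-sums then yields $2t$, contradicting boundedness. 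Your version instead adapts the proof of Proposition \ref{all_finite}: compare $\sum_{r\le H}f(Wn+r)$ with $\exp(F(W))\sum_{r\le H}f(r)$ using Lemma \ref{concentrat} and a single twist at the primes $p_j\in S$. This works — note in particular that Lemma \ref{concentrat} is stated and proved for multiplicative (not completely multiplicative) $f:\mathbb N\to\mathbb U$, and since all primes $\le w$ divide the modulus, the squarefree restriction on $Qn+a$ only costs a density-$O(1/w)$ exceptional set, which the $L^1$ bound absorbs. The paper's argument buys a fully explicit and self-contained construction needing only CRT and union bounds (it also conditions on $\mu^2((H!)^2m+r)=1_{r\in\mbf{r}'}$ so that the non-$\mbf{r}'$ terms vanish identically, a step you replace by the concentration estimate), whereas yours buys a proof that is uniform in style with Proposition \ref{all_finite} and avoids the two-interval bookkeeping. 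Two small points to fix in a write-up: the Markov/union bound over the $O(H)$ values of $r$ requires $\mathbb{D}(1,f\bar\chi;w,x)+w^{-1/2}\ll\varepsilon/H$, not $(\varepsilon/H)^2$; and you should record, as the paper does, the harmless reduction to $g$ completely multiplicative, so that $g(r_j)=\prod_{p\mid r_j}g(p)$ and the factorizations $f(Wn+r)=f(r)f((W/r)n+1)$ behave as claimed.
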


\begin{proof}
Note that $f(n) = \mu^2(n) \prod_{p|n} g(p)$, so we may assume without loss of generality that $g$ is completely multiplicative. 

Let $H$ be sufficiently large in terms of $q$, and for $m \in \mb{N}$ set $I_H(m) := [(H!)^2m+1,(H!)^2m+H]$. Let $1 \leq \ell \leq H$. Fix a set $\mbf{r} = \{r_1,\ldots,r_{\ell}\} \subset [1,H]$ such that 
\begin{enumerate}[(i)]
    \item $\mu^2(r_j) = 1$ for all $1 \leq j \leq \ell$;
    \item $p|r_j \Rightarrow p \notin S$, for all $1 \leq j \leq \ell$;
\end{enumerate}
the existence of such tuples, for $H$ large enough in terms of $\ell$, follows from a lower bound sieve. By the pigeonhole principle we can then select a subset $\mbf{r}'=\{r_1',\ldots,r_t'\} \subseteq \mbf{r}$ such that $g(r_j') = g(r_k')$ for all $1 \leq j,k \leq t$, with $t \geq \ell/2$. 

Assume that $|S| = \infty$. Then we can pick distinct primes $p_1,\ldots,p_t > H$ with $p_j \in S$ for all $1 \leq j \leq t$. With the $t$-tuple $\mbf{r}'$ chosen above and $x$ chosen large as a function of $H$, we define the set
\begin{align*}
\mc{M}_{\mbf{r}'}(x) := \Big\{m \leq x : \Big( p \in S\, \wedge\, p|\prod_{n \in I_H(m)} n \Rightarrow p \leq H\Big) \text{ and } \mu^2((H!)^2m+r) =1_{r\in \mbf{r}'}, r \in [1,H]\Big\}.
\end{align*}
We claim that $\mc{M}_{\mbf{r}'}(x) \gg_H x$, which we may prove as follows.

Let $w$ be large enough in terms of $H$, and $H$ large enough relative to $\ell$ in order for the following estimates to hold:
$$
H\sum_{\substack{p > w \\ p \in  S}} \frac{1}{p}, \ell \sum_{p > w} \frac{1}{p^2} \ll 1/H
$$
For each $j \notin \mbf{r}'$ fix a prime $P_j \in (w,10 w] \cap S^c$, and define $P \leq (10w)^{H} = O_H(1)$ to be their product. By the Chinese remainder theorem, there is a residue class $a \pmod{P^2}$ such that if $m \equiv a \pmod{P^2}$ then $(H!)^2m + j \equiv 0 \pmod{P_j^2}$ for all $j \notin \mbf{r}'$. 

Set $W := \prod_{p \leq w} p$ and consider the arithmetic progression 
$$
\mc{A}(x) := \{m \leq x : m \equiv a \pmod{P^2}, m \equiv 0 \pmod{W^2}\} := \{m \leq x : m \equiv a' \pmod{(PW)^2}\}.
$$ 
We claim that $|\mc{A}(x) \bk \mc{M}_{\mbf{r}'}(x)| \leq |\mc{A}(x)|/2$ for $H$ large enough, which implies in particular that 
$$
|\mc{M}_{\mbf{r}'}(x)| \gg |\mc{A}| \gg x/(WP)^2 \gg_H x,
$$
as required. To see this, let $m \in \mc{A}(x) \bk \mc{M}_{\mbf{r}'}(x)$. Thus, either: (a) there is a $1 \leq j \leq H$ for which $p | ((H!)^2m + j)$ for some $p \in S$, $p > H$, or (b) there is $r_j' \in \mbf{r}'$ such that $p^2|((H!)^2m + r_j')$ for some prime $p$. 

In case (a), as $W^2| m$ it follows in fact that $p > w$, and as $p \in S$ we have that $p \nmid PW$. Thus, by the union bound and the Chinese remainder theorem the number of $m$ in question is 
$$
\ll \sum_{1 \leq j \leq H} \sum_{\substack{p > w \\ p \in S}} \sum_{\substack{m \leq X \\ m \equiv a' \pmod{(PW)^2} \\ (H!)^2 m + j \equiv 0 \pmod{p}}} 1 \ll \frac{X}{(PW)^2} \cdot H\sum_{\substack{p > w \\ p \in S}} \frac{1}{p} \ll \frac{X}{(PW)^2 H}.
$$
In case (b), if $p^2|((H!)^2m+r_j')$ then, again as $W^2|m$ we have $p > w$ (with $p \nmid P$ since otherwise $p = p_i$ for some $i$, whence $p|(r_j'-i)$ a contradiction to the fact that $p > w \geq H$). Thus, again by the union bound and the Chinese remainder theorem the number of such elements of $\mc{A}(x)$ is bounded by
$$
\leq \sum_{1 \leq j \leq t} \sum_{\substack{p > w \\ p \nmid P}} \sum_{\substack{m \leq X \\ m \equiv a' \pmod{(PW)^2} \\ (H!)^2m + r_j'\equiv 0 \pmod{p^2}}} 1 \ll \frac{X}{(PW)^2} \cdot \ell \sum_{ p > w} \frac{1}{p^2} \ll \frac{X}{(PW)^2 H}.
$$
In summary, we obtain
$$
|\mc{A}(x) \backslash \mc{M}_{\mbf{r}'}(x)| \ll \frac{X}{(PW)^2H} \ll |\mc{A}(x)|/H,
$$
which is indeed $\leq |\mc{A}(x)|/2$ as soon as $H$ is sufficiently large, as required.

We also define $\mc{N}_{\mbf{r}'}(x)$ to be the set of $m \leq x$ satisfying the following properties: 
\begin{enumerate}[(i)]
    \item  if $p \in S$ and $p|\prod_{\substack{1 \leq r \leq H \\ r \not \in \mbf{r}'}} ((H!)^2m+r)$, then $p \leq H$;
\item  for each $1 \leq j \leq t$, $(H!)^2 m + r_j' \equiv p_j \pmod{p_j^2}$, and if $p \in S \backslash \{p_j\}$ then $p \nmid ((H!)^2m+r_j)$;

\item We have $\mu^2((H!)^2m+r) = 1_{r\in \mbf{r}'}$.\\
By a similar argument as for $\mc{M}_{\mbf{r}'}(x)$, this set satisfies $\mc{N}_{\mbf{r}'}(x) \gg_{p_j,H} x$ by the Chinese remainder theorem and a lower bound sieve.
\end{enumerate}

We now pick $m' \in \mc{N}_{\mbf{r}'}(x)$ and $m \in \mc{M}_{\mbf{r}'}(x)$ with $m' > m$ and note that (using notation from Subsection \ref{subsec_notation}) $((H!)^2m+r_j',S) = p_j(r_j', S \cap [1,H]) = p_j$. Now, if we set 
$$
P_r(m) := \prod_{\substack{p^k|| (H!)^2m+r \\ p \in S}} p^k,
$$ 
then for $m \in \mc{M}_{\mbf{r}'}(x)$ we have $P_{r_j'}(m)=r_j'/\prod_{p^k\mid \mid  r_j', p\not \in S}p^k$ and recalling $g(p)=\chi(p)$ for $p\not\in S$ this implies
$$
g((H!)^2m+r_j') = g(P_{r_j'}(m)) \chi\Big(\frac{(H!)^2}{P_{r_j'}(m)} m + \frac{r_j'}{P_{r_j'}(m)}\Big) = g(P_{r_j'}(m)) \chi\Big(\frac{r_j'}{P_{r_j'}(m)}\Big) = g(r_j'),
$$ 
for all $1 \leq j \leq t$, where the last equality comes from  $g(r_j)=\prod_{p^k\mid \mid r_j, p\in S}g(p^k)\prod_{p^k\mid \mid r_j, p\not \in S}g(p^k)$. On the other hand, as $((H!)^2m'+r_j', S) = p_j \cdot (r_j',S\cap [1,H])$ for $m'\in \mc{N}_{\mbf{r}'}(x)$, we have $P_{r_j'}(m') = p_jP_{r_j'}(m)= p_jr_j'/\prod_{p^k\mid \mid  r_j', p\not \in S}p^k$, and so
\begin{align*}
g((H!)^2m'+r_j') &= g(p_j) g(P_{r_j'}(m)) \chi\Big(\frac{(H!)^2 m' + r_j'}{p_jP_{r_j'}(m)}\Big) = g\chi(p_j) g(P_{r_j'}(m)) \chi\Big(\frac{(H!)^2}{P_{r_j'}(m)}m + \frac{r_j'}{P_{r_j'}(m)}\Big) \\
&= g\chi(p_j) g(P_{r_j'}(m)) \chi\Big(\frac{r_j'}{P_{r_j'}(m)}\Big) = g\chi(p_j) g(r_j').
\end{align*}
Using the preceding two formulas and recalling the definitions of $\mathcal{M}_{\mbf{r}'}(x)$ and $\mathcal{N}_{\mbf{r}'}(x)$, it follows that
\begin{align*}
\Big|\sum_{n \in I_H(m')} f(n) - \sum_{n \in I_H(m)} f(n)\Big| &= \Big|\sum_{\substack{n \in I_H(m') \\ \mu^2(n) = 1}} g(n) - \sum_{\substack{n \in I_H(m) \\ \mu^2(n) = 1}} g(n)\Big| \\
&= \Big|\sum_{1 \leq j \leq t} (g((H!)^2 m' + r_j')-g((H!)^2m+r_j'))\Big| \\
&= \Big|\sum_{1 \leq j \leq t} (1-(g\chi(p_j)))g(r_j')\Big| = 2t \geq \ell
\end{align*}
given that $g(r_j)$ has the same sign for all $1 \leq j \leq t$, and $g\chi(p_j) = -1$ on $S$. Now since $H$ can be taken large, $\ell$ can also be chosen as large as desired; on the other hand, we have
\begin{align}\label{equ7}
\Big|\sum_{n \in I_H(m')} f(n) - \sum_{n \in I_H(m)} f(n)\Big| \leq 4 \sup_{1 \leq y \leq 2(H!)^2x} \Big|\sum_{n \leq y} f(n)\Big|.
\end{align}
Thus the right-hand side of \eqref{equ7} can be made arbitrarily large. This is a contradiction, so the claim $|S| < \infty$ follows.
\end{proof}

The case of modified characters that remains is still nontrivial, and we handle it in the following proposition.

\begin{prop}[A Dirichlet series argument]\label{prop_dirichlet} Let $g:\mathbb{N}\to \{-1,+1\}$ be multiplicative, and let $f=\mu^2g$. Suppose that there exists a real character $\chi$ such that $g(p)=\chi(p)$ for all but finitely many $p$. Then for any $\varepsilon>0$ we have
\begin{align*}
\sum_{n\leq x}f(n)=\Omega(x^{1/4-\varepsilon}).    
\end{align*}
\end{prop}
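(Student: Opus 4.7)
The plan is to carry out a soft Dirichlet series argument: I would factor the generating function of $f$ in terms of $L(s,\chi)$ and $\zeta(2s)$, assume for contradiction that $\sum_{n\le x} f(n) = O(x^{1/4-\varepsilon})$, and derive a contradiction by comparing zeros of $\zeta$ on the critical line (there are very many by Hardy--Selberg) with zeros of $L(s,\chi)$ in the region $\Re(s) \ge 3/4$ (there are very few by classical zero-density estimates).

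First I would factor the Dirichlet series. Since $f = \mu^2 g$ is multiplicative and supported on squarefrees, for $\Re(s) > 1$ we have
\[ F(s) := \sum_{n\ge 1} f(n)n^{-s} = \prod_p \bigl(1 + g(p)p^{-s}\bigr). \]
Isolating the finite set $S = \{p : g(p) \neq \chi(p)\}$ (which contains the primes dividing the conductor $q$ of $\chi$) and using $\chi(p)^2 = 1$ for $p \nmid q$ to write $1+\chi(p)p^{-s} = (1-p^{-2s})/(1-\chi(p)p^{-s})$, I would rewrite the Euler product as
\[ F(s) = E(s) \cdot \frac{L(s,\chi)}{\zeta(2s)}, \]
where $E(s)$ is a finite quotient of local Euler factors supported on primes in $S \cup \{p : p\mid q\}$, hence analytic and non-vanishing throughout $\Re(s) > 0$. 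The case of principal $\chi$ is trivial (the pole of $L(s,\chi)$ at $s=1$ forces $\sum_{n\le x} f(n) \asymp x$), so I may assume $\chi$ is non-principal.

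Next, assuming $S_f(x) := \sum_{n\le x}f(n) = O(x^{1/4-\varepsilon})$ for some fixed $\varepsilon > 0$, partial summation gives $F(s) = s\int_1^\infty S_f(u)u^{-s-1}\,du$ absolutely convergent in $\Re(s) > 1/4 - \varepsilon$, so $F$ is holomorphic there. Combined with the factorization and the non-vanishing of $E$ in $\Re(s) > 0$, this forces $L(s,\chi)$ to vanish at every zero of $\zeta(2s)$ in the strip $1/4-\varepsilon < \Re(s) < 1/2$ to at least the same order. Since Hardy--Selberg guarantees $\gg T \log T$ zeros $1/2 + i\gamma$ of $\zeta$ with $|\gamma| \le T$, each such zero at $1/4 + i\gamma/2$ is a forced zero of $L(\cdot,\chi)$, so that $L(s,\chi)$ has $\gg T\log T$ zeros on $\Re(s) = 1/4$ with imaginary part $|t| \le T/2$, counted with multiplicity.

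Finally, I would apply the functional equation for the real non-principal character $\chi$ to translate these into $\gg T\log T$ zeros of $L(s,\chi)$ on $\Re(s) = 3/4$ within the same height range, and invoke a classical zero-density estimate such as Ingham's $N_\chi(3/4, T) \ll T^{3/5 + o(1)}$ to reach the desired contradiction. The main step requiring care is the bookkeeping: verifying that $E(s)$ neither vanishes nor has poles in $\Re(s) > 0$, and that multiplicities match correctly when transferring zeros of $\zeta(2s)$ to forced zeros of $L(s,\chi)$, since the whole argument collapses if any relevant pole of $L(s,\chi)/\zeta(2s)$ is cancelled by $E$. Once this bookkeeping is in place the contradiction is immediate and quantitative.
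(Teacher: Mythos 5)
Your proposal is correct and follows essentially the same route as the paper: factor $F(s) = \sum f(n)n^{-s}$ as a non-vanishing finite Euler product times $L(s,\chi)/\zeta(2s)$, use the hypothetical bound on partial sums to analytically continue $F$ past $\Re(s)=1/4$, observe that Hardy--Littlewood/Selberg zeros of $\zeta$ on the critical line then force $\gg T$ (or $\gg T\log T$) zeros of $L(s,\chi)$ on $\Re(s)=1/4$, reflect via the functional equation of the real character to $\Re(s)=3/4$, and contradict a zero-density estimate. The only differences from the paper are cosmetic: you invoke Selberg's $\gg T\log T$ count and Ingham's density bound where the paper uses the weaker Hardy--Littlewood count $\gg T$ and Huxley's bound, but either pair suffices; you also correctly retain the factor $s$ in the Mellin integral $F(s) = s\int_1^\infty S_f(u)u^{-s-1}\,du$ that the paper omits (a harmless typo there).
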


\begin{remark}
A version of this result conditional on GRH was obtained by Aymone \cite[Thm 1.3]{aym2}.
\end{remark}

\begin{proof} We may assume that $\chi\pmod q$ is non-principal, as otherwise a bound of $\gg x$ for the partial sums follows from Delange's theorem (Lemma \ref{le_hal}). 

Consider the Dirichlet series $F(s) := \sum_{n \geq 1} f(n)/n^s$ of $f$, which defines an analytic function in the region $\text{Re}(s) > 1$. Also, put $M_f(x) := \sum_{n \leq x} f(n)$. We will show that $|M_f(x)|=O(x^{1/4-\varepsilon})$ cannot hold for any fixed $\varepsilon>0$. 

By partial summation, we have
$$
F(s) = \int_1^{\infty} M_f(x) x^{-s-1} dx,
$$
initially for $\Re(s)>1$, but if $M_f(x)=O(x^{1/4-\varepsilon})$, then this formula extends $F$ analytically to the half-plane $\text{Re}(s) > 1/4-\varepsilon$. 

Let $q'$ be the product of $q$ and those finitely many primes for which $g(p)\neq \chi(p)$. Now, comparing Euler products when $\text{Re}(s) > 1$, we note that since $\chi(p)^2 = 1$ for all $p\nmid q'$,
\begin{align*}
F(s) &= \prod_p \Big(1+g(p)p^{-s}\Big) = \prod_{p|q'} (1+g(p)p^{-s}) \prod_{p\nmid q'} (1+\chi(p)p^{-s}) \\
&= \prod_{p|q'} (1+g(p)p^{-s}) \prod_{p \nmid q'} (1-p^{-2s})(1-\chi(p)p^{-s})^{-1} \\
&= \prod_{p|q'} (1+g(p)p^{-s})(1-\chi(p)p^{-s})(1-p^{-2s})^{-1} L(s,\chi)\zeta(2s)^{-1}\\
&=: P(s) L(s,\chi)\zeta(2s)^{-1}.
\end{align*}
Note that $P(s)$ extends analytically to $\text{Re}(s) > 0$ and has no zeros there, so by analytic continuation we see that $L(s,\chi)\zeta(2s)^{-1} = F(s)/P(s)$ is analytic in the half-plane $\text{Re}(s) > 1/4-\varepsilon$. 

By an old result of Hardy and Littlewood \cite{har_lit}, we can find $\gg T$ real numbers $\gamma \in [-T,T]$ such that $\zeta(1/2+2i\gamma) = 0$. Thus, there are $\gg T$ points $\rho = 1/4 + i\gamma$ at which $\zeta(2\rho) = 0$. Since $L(s,\chi)/\zeta(2s)$ must be analytic along $\text{Re}(s) = 1/4$, we have $L(\rho,\chi) = 0$ for these $\gg T$ points $\rho$, and thus 
\begin{align}\label{eq15}
|\{(\sg,t) \in (0,1/4]\times [-T,T]: L(\sg + it,\chi) = 0\}| \gg T.
\end{align}
On the other hand, since $\chi$ is real, the functional equation gives that $L(\sg + it,\chi) = 0$ if and only if $L(1-\sg - it,\chi) = 0$ whenever $\sg \in (0,1)$, and combining this with Huxley's zero density estimate \cite{huxley}, we have
\begin{align*}
|\{(\sg,t) \in (0,1/4]\times [-T,T]: L(\sg + it,\chi) = 0\}| &= |\{(\sg,t) \in [3/4,1) \times [-T,T] : L(\sg + it,\chi) = 0\}| \\
&\ll_{\eta} (qT)^{(12/5+\eta)(1-3/4)} \ll T^{0.7} = o(T),
\end{align*}
which contradicts \eqref{eq15}. (Note that we only needed a very weak saving here, and thus also some zero density estimates older than Huxley's would have sufficed.) 

It follows that $F(s)$ is not analytic in $\text{Re}(s) > 1/4-\varepsilon$ if $\varepsilon > 0$, and therefore $|M_f(x)|\gg x^{1/4-\varepsilon}$ for infinitely many integers $x\geq 1$, which is the desired contradiction.
\end{proof}

\begin{proof}[Proof of Theorem \ref{thm_sfedp}]
Assume for the sake of contradiction that there is a multiplicative function $g : \mb{N} \ra \{-1,+1\}$ such that for $f := \mu^2 g$ we have
\begin{align}\label{eq16}
\limsup_{x \ra \infty} \Big|\sum_{n \leq x} f(n) \Big| < \infty.
\end{align}
By \cite[Theorem 1.1]{Ayo}, there is a real, primitive Dirichlet character $\chi$ modulo $q$, where $(q,6) = 1$, such that $f$ pretends to be $\chi$, i.e., $\sum_p (1-f(p)\chi(p))/p < \infty$, and such that $f(2)\chi(2) = f(3)\chi(3) = -1$. Now, by Proposition \ref{prop_redaym}, the set $S := \{p : f(p)\chi(p) = -1\}$ must be finite. But then $g(p)=\chi(p)$ for all but finitely many primes, in which case Proposition \ref{prop_dirichlet} gives the desired a contradiction.\end{proof}

\section{Examples where \texorpdfstring{$f(p)$}{f(p)} vanishes at many primes} \label{sec_exs}
\noindent We now construct various types of  completely multiplicative functions $f$ that satisfy the assumptions of Theorem~\ref{thm_bddsums} apart from $\sum_p (1-|f(p)|)/p<\infty$, and that look nothing like twisted Dirichlet characters. In particular, we will see that there are $\mathfrak{c}=|\mathbb{R}|$ such functions $f:\mathbb{N}\to \mathbb{Q}\cap [-1,1]$.

Note that any function $f$ for which 
\begin{align}\label{eq30}
\sum_{n\leq x}|f(n)|=O(1) 
\end{align}
trivially has bounded partial sums. We call functions not satisfying \eqref{eq30} \emph{nontrivial}.

i) Let $p$ be a prime. Let $\alpha \in (0,1)$, and define a completely multiplicative function by $f(p) = e^{2\pi i \alpha}$, $f(p') = 0$ for all $p' \neq p$. Then $f$ is nontrivial. We clearly have
\begin{align*}
\sum_{n \leq x} f(n) = \sum_{\ell \leq \log x/\log p} e(\alpha \ell) = O_{\alpha}(1),
\end{align*}
using the geometric sum formula.

ii) Let $\mathbf{h}:\mathbb{N}\to \{-1,+1\}$ be a random completely multiplicative multiplicative function (the values at the primes being independent Rademacher random variables). Let $f(n)=\mathbf{h}(n)/n^{r}$ for any fixed $r\in [0,1]$. Then $f$ is nontrivial. Moreover, if $r>1/2$, almost surely we have 
\begin{align}\label{random}
\sum_{n\leq x}f(n)=O(1),    
\end{align}
whereas if $r< 1/2$, then almost surely \eqref{random} fails. Thus there is a transition in behavior at $r=1/2$. The above implies that if we remove from Theorem \ref{thm_bddsums} the assumption \eqref{eq_thin}, then there are cardinality $\mathfrak{c}=|\mathbb{R}|$ counterexamples, even when restricted to functions $f:\mathbb{N}\to \mathbb{Q}\cap [-1,1]$.

iii) When it comes to deterministic versions of the example above, if $f(n)=\lambda(n)/n$ where $\lambda$ is the Liouville function, then by the prime number theorem and partial summation (and the fact that $\sum_{n\geq 1} \lambda(n)/n=0$), for $x$ sufficiently large we have
$$
\Big|\sum_{n \leq x}f(n)\Big| \ll_A (\log x)^{-A},
$$
which establishes rapid decay of the partial sums towards zero in this case. More generally, if $f(n)=h(n)/n$ with $\mathbb{D}(h,\lambda;\infty)<\infty$, then $h$ is nontrivial and the partial sums of $f$ converge to zero by \cite[Theorem 2.4]{goldmakher}. Thus, the partial sums of a nontrivial $f$ can even converge to zero without $f$ resembling a character.

iv) By Lemma \ref{le_perturb}, if we perturb any of the above functions $f$ by choosing a finite set $S$ of primes and changing their values at $p\in S$ to be any numbers $z_p$ with $|z_p|<1$, then the partial sums continue to be bounded.

As noted before, the assumption \eqref{eq_thin} in Theorem \ref{thm_bddsums} is equivalent to $\sum_{n\leq x}|f(n)|=\Omega(x)$. Let us indicate why relaxing this assumption even a bit, say to $\sum_{n\leq x}|f(n)|=\Omega(x/(\log x)^{\delta})$, would either require a completely different approach or considerable progress on correlations of multiplicative functions. If we follow the reduction to the pretentious case in Lemma \ref{le_pret} for such ``sparse'' $f$, then to achieve that reduction we would need
\begin{align*}
\max_{1\leq |h|\leq H}\sum_{n\leq x}\frac{f(n)\bar{f}(n+h)}{n}=o\Big(\frac{1}{H}\sum_{n\leq x}\frac{|f(n)|^2}{n}\Big)   
\end{align*}
for $H=(\log x)^{\delta}$ (we cannot take $H$ smaller than this, since otherwise we would expect the right-hand side of \eqref{equ10} to be $O(1)$). Since $f$ is a sparse function, no such estimate is currently known, even for fixed $h$. Also the range of uniformity $1\leq h\leq H$ required here goes beyond what is currently known even for the case of non-sparse functions $f:\mathbb{N}\to \mathbb{U}$. Therefore, obtaining such a relaxation of our assumption appears challenging.

\bibliography{ruzsarefs}
\bibliographystyle{plain}

\end{document}